\newtheorem{theorem}{Theorem}
\newtheorem{lemma}{Lemma}
\newtheorem{corollary}{Corollary}
\begin{document}
\begin{center}
  \Large
  \textbf{Optimal designs for $K$-factor two-level models with first-order interactions on a
    symmetrically restricted design region}
\end{center}
\begin{center}
  \textbf{Fritjof Freise\footnote{corresponding author}\\
    TU Dortmund University, Department of Statistics,\\
    Vogelpothsweg 87, 44227 Dortmund, Germany,\\
    e-mail: fritjof.freise@tu-dortmund.de}
\end{center}
\begin{center}
  \textbf{Rainer Schwabe\\
    University of Magdeburg, Institute for Mathematical Stochastics,\\
    Universit\"atsplatz 2, 39106 Magdeburg, Germany,\\
    e-mail: rainer.schwabe@ovgu.de}
\end{center}

\begin{abstract}
  We develop $D$-optimal designs for linear models with first-order
  interactions on a subset of the $2^K$ full factorial design region,
  when both the number of factors set to the higher level and the
  number of factors set to the lower level are simultaneously bounded
  by the same threshold.  It turns out that in the case of narrow
  margins the optimal design is concentrated only on those design
  points, for which either the threshold is attained or the numbers of
  high and low levels are as equal as possible. In the case of wider
  margins the settings are more spread and the resulting optimal
  designs are as efficient as a full factorial design.  These findings
  also apply to other optimality criteria.
\end{abstract}

\section{Introduction}
In the situation of item calibration for psychological tests the goal
is to estimate the difficulty of new items or of the effect of certain
attributes of an item.  In the rule based approach for item generation
the difficulty of an item can be split into several components
corresponding to different rules, which may or may not be applied.
Conceptually this results in a linear predictor for the difficulty
based on the active rules.  Typically an item will become more
difficult, if more rules are used in its construction.  To avoid so
called ceiling or bottom effects, i.e.\ an item is answered correctly
or wrong, respectively, by all the participants, items should not be
too difficult nor too easy.  Moreover, the assumed model may become
inappropriate if too extreme items, i.e.\ items with too many or items
with too few rules, are used.  All this may cause the necessity to
restrict the number of rules used from below and above.  In this
scenario standard designs like full or standard fractional factorials
are no longer applicable.

The application, which motivated this research, was in the latter
regime. Here the total number of rules was six and items with two up
to four active rules were to be used.  This lead directly to the
question in which cases fully efficient designs exist, i.e.\ designs
which are as efficient as the full factorial design. These results,
which will be presented in Theorem~\ref{thm:DoptEffAsFactorial}, can
also be the basis for the search for and construction of irregular
fractions.

For the case of a linear predictor which contains only main effects,
optimal designs have been characterized in
\citet*{FreiseHollingSchwabe:2018prea}.  We will follow the lines
indicated there to extend the results to situations where first-order
(two-factor) interactions have to be incorporated.  The approach used
for characterizing optimal designs is based on invariance and
equivariance considerations with respect to natural symmetries in both
the model and the design region.  This approach has successfully been
applied also in other settings like spring balance weighing
\citep*[see][]{FilovaHarmanKlein:2011a}, and the findings obtained
here may be transferred to other situations, where there are natural
constraints on the simultaneous occurrence of high or of low levels,
respectively.  To keep notations simple and to reduce technicalities,
we will confine ourselves to the case of symmetric constraints.

The manuscript is organized as follows.  After a brief description of
the model and basic concepts of design and invariance in
Section~\ref{sec:ModelAndInfo}, the general structure of the
information matrix is presented in Section~\ref{sec:StructureOfInfo}.
In Section~\ref{sec:symmetric-case} we specialize to the symmetric
case and present characterizations of optimal designs in
Section~\ref{sec:DoptDesign}.  The manuscript concludes with a brief
discussion on potential extensions in Section~\ref{sec:discussion} and
is augmented by tables of optimal designs for up to $22$ rules.  All
proofs are deferred to an appendix.

\section{Model Description, Information and Invariance}
\label{sec:ModelAndInfo}
We consider an experiment in which $N$ items are presented and $K$
rules may be used to construct an item. Then an item can be
characterized by its settings (design points)
$\bm{x}=(x_{1},\ldots,x_{K})^\top\in\{-1,+1\}^K$, where $x_{k}= +1$,
if the $k$-th rule is used in the construction of the item, and
$x_{k}=-1$, if the $k$-th rule is absent.

Even though using $0$ and $1$ for inactive and active rules would seem
to be more natural in our application, the present parametrization was
chosen out of convenience. Especially the information matrices have a
more intuitive representation. Note also, that for the $D$-criterion,
which is primarily of interest here, reparametrization has no
influence on the optimal design.

We assume an underlying analysis of variance model with first-order
(two-factor) interactions for the observations, i.e.\ for the item
scores, $Y_1,\ldots,Y_N$ obtained for $N$ items described by their
settings $\bm{x}_1,\ldots,\bm{x}_N$.  This model can be written as
\begin{equation*}
  Y_{i}=\bm{f}(\bm{x}_{i})^\top\bm{\beta}+\varepsilon_{i}\,,
\end{equation*}
$i=1,\ldots,N$, where the regression function $\bm{f}$ is specified by
\begin{equation*}
  \bm{f}(\bm{x})
  =\begin{pmatrix}
    1&\bm{x}^\top&\bm{\tilde{x}}^\top
  \end{pmatrix}^\top
\end{equation*}
and $\bm{\tilde{x}}=(x_{1}x_{2},\ldots,x_{K-1}x_{K})^\top$ collects
the interactions.  The difficulties of the items are, hence, specified
by the $p$-dimensional parameter vector $\bm{\beta}$, which includes a
constant term, $K$ parameters corresponding to the main effects and
${K \choose 2}= K(K-1)/2$ parameters for the first-order (two-factor)
interactions.  Thus the total number of parameters amounts to
$p=1+K(K+1)/2$.  The error terms
$\varepsilon_{1},\ldots,\varepsilon_{N}$ are assumed to be centered,
uncorrelated and homoscedastic, with
$\mathrm{Var}(\varepsilon_{i})=\sigma^{2}$.

Note, that in the calibration example several items may be solved by
the same person and, hence, the assumption of uncorrelated
observations may be violated. Nonetheless, uncorrelatedness of the
observations will be assumed here, to keep the results simple.

The quality of an experiment may be measured in terms of the
information matrix
\begin{equation*}
  \mathbf{M}(\bm{x}_1,\ldots,\bm{x}_N)
  = \sum_{i=1}^{N}\bm{f}(\bm{x}_{i})\bm{f}(\bm{x}_{i})^\top\,.
\end{equation*}
It is well-known that in the case of full rank the variance-covariance
matrix of the least squares estimator is proportional to the inverse
of the information matrix.  Thus the aim of an optimal design is to
find optimal settings $\bm{x}_1,\ldots,\bm{x}_N$, which minimize the
variance-covariance matrix or, equivalently, maximize the
information matrix in a suitable sense.  As a uniform optimization of
the matrices in the Loewner ordering of nonnegative definiteness is
impossible, one has to choose some real-valued information functional
on the matrices.  Here we will adopt the most popular criterion of
$D$-optimality, which aims at maximizing the determinant of the
information matrix. This can be interpreted as minimization of the
volume of the confidence ellipsoid for the whole parameter vector
under the additional assumption of Gaussian error terms.  However, the
results obtained may be generalized also to other criteria which share
the invariance properties described below.  Examples include $A$- and
$E$-optimality \citep[see also][]{FilovaHarmanKlein:2011a}.

To facilitate the search for optimal designs we will make use of the
concepts of approximate designs and the corresponding well-developed
theory \citep[see for example][]{Silvey:1980}: An approximate design
\begin{equation*}
  \xi=\left\{
    \begin{matrix}
      \bm{x}_{1}&\ldots&\bm{x}_{n}\\
      w_{1}&\ldots&w_{n}
    \end{matrix}
  \right\}
\end{equation*}
is defined by $n$ mutually different settings
$\bm{x}_1,\ldots,\bm{x}_n$ with associated weights $w_{i} \geq 0$
satisfying $\sum_{i=1}^{n}w_{i} = 1$.  The settings $\bm{x}_i$ in the
approximate design $\xi$ correspond to the mutually different settings
occurring in the exact design $(\bm{x}_1,\ldots,\bm{x}_N)$ of sample
size $N$ specified before, and the weights represent the corresponding
frequencies $w_i=N_i/N$ of occurrence, where $N_i$ is the number of
replications of $\bm{x}_i$.  However, in general, for an approximate
design the condition on the weights being multiples of $1/N$ is
relaxed.

The weighted (per observation) information matrix of an approximate
design $\xi$ is then defined by
\begin{equation*}
  \mathbf{M}(\xi)
  = \sum_{i=1}^{n}w_{i}\bm{f}(\bm{x}_{i})\bm{f}(\bm{x}_{i})^\top\;.
\end{equation*}
For an exact design of sample size $N$ this definition coincides with
the standardized information matrix
$1/N\cdot\mathbf{M}(\bm{x}_1,\ldots,\bm{x}_N)$.

The settings $\bm{x}_i$ may be chosen from a design region
$\mathcal{X}$ of possible settings, for which the model equation
holds.  In the present situation we assume that the design region
$\mathcal{X}\subset \{-1,+1\}^K$ is restricted by the possible
combinations of rules, i.e.\ by the number of rules simultaneously
applied.  Denote by $d(\bm{x}) = (\sum_{k=1}^{K}x_{k} + K) / 2$ the
number of active rules, i.e.\ the number of entries equal to $+1$ in a
design point $\bm{x}$ and let $L$ and $U$ be the minimal and maximal
number, respectively, of active rules allowed.  Then the design region
is defined as
\begin{align*}
  \mathcal{X}_{L,U}
  &= \{\bm{x}\in\{-1,+1\}^K ;\ L \leq d(\bm{x}) \leq U\}.
\end{align*}
The condition on $d(\bm{x})$ can be rewritten as
$2L-K \leq \sum_{k=1}^{K}x_{k} \leq 2U-K$.

One of the major results in the context of approximate design theory is
the equivalence theorem of \citet{KieferWolfowitz:1960}. It states
that a design $\xi^*$ is $D$-optimal if and only if
\begin{equation}
  \label{eq:equivalencetheorem}
  \bm{f}(\bm{x})^\top\mathbf{M}(\xi^*)^{-1}\bm{f}(\bm{x}) \leq p
\end{equation}
for all design points $\bm{x}$ in the design region $\mathcal{X}$.
The left-hand side is the so called sensitivity function and will be
denoted by $\psi(\bm{x})$.

We will make use of invariance properties to reduce the complexity of
the optimization problem. See for example \cite{Pukelsheim:1993} and
\cite{Schwabe:1996} for details and further references.  The design
problem on the vertices of the hypercube $\{-1,+1\}^{K}$ is invariant
under permutation of entries in the design point, i.e.\ the
permutation of rules. These permutations constitute a transformation
group on the full $2^K$ hypercube as well as on the restricted design
region $\mathcal{X}_{L,U}$. On the full hypercube there are $K+1$
orbits under the permutation, denoted
$\mathcal{O}_{0},\ldots,\mathcal{O}_{K}$. Each orbit $\mathcal{O}_{k}$
consists of all items with $k$ active rules or, equivalently, of the
design points with $d(\bm{x})=k$ entries equal to $+1$,
$k=0,1,\ldots,K$. Moreover, the regression function is linearly
equivariant with respect to these permutations, i.e.\ for each
permutation $g$ there is a matrix $\mathbf{Q}_g$ such that
$\bm{f}(g(\bm{x}))=\mathbf{Q}_g\bm{f}(\bm{x})$ for all $\bm{x}$. Since
the $D$-criterion is invariant under these conditions, there exists a
$D$-optimal design uniform on the orbits, i.e.\ all settings within
one orbit get the same weight. Such designs will also be called
invariant with respect to permutations, since they remain unchanged,
if the support is transformed.

Denote the uniform design on the orbit $\mathcal{O}_k$ with $k$ active
rules by
$\bar{\xi}_{k}$. The corresponding information matrix $\mathbf{M}(\bar{\xi}_k)$
measuring the information of the orbit $\mathcal{O}_k$ is
given by
\begin{equation*}
  \mathbf{M}(\bar{\xi}_{k}) = C(K,k)^{-1}
  \sum_{\bm{x}\in \mathcal{O}_{k}}\bm{f}(\bm{x})\bm{f}(\bm{x})^\top\,,
\end{equation*}
where here and later on we use the notation
$C(K,k)$ for the binomial coefficient ${K \choose k}$,
when this is more convenient.

Any invariant design $\bar{\xi}$ is a convex combination of these
single orbit designs:
\begin{equation*}
\bar{\xi}
  =\sum_{k=0}^{K}\bar{w}_{k}\bar{\xi}_{k}
\end{equation*}
with weights $\bar{w}_{k} \geq 0$, $\sum_{k=0}^{K}\bar{w}_{k} = 1$.
Thus the information
matrix of an invariant design $\bar{\xi}$ becomes
\begin{equation*}
  \mathbf{M}(\bar{\xi})
  =\sum_{k=0}^{K}\bar{w}_{k}\mathbf{M}(\bar{\xi}_{k})\,.
\end{equation*}
Hence, the optimization can be confined to finding optimal weights 
$\bar{w}_{k}$ on the orbits.
Note that the restricted design region $\mathcal{X}_{L,U}$ just 
imposes the side conditions $\bar{w}_k=0$ for $k<L$ or $k>U$.

In the following we further consider the special case of design
regions of the form $\mathcal{X}_{L,K-L}$, i.e.\ symmetric thresholds
$U=K-L$, which bound both the number $d(\bm{x})$ of active rules and
the number $K-d(\bm{x})$ of inactive rules by the same threshold $U$
form above.  Under this condition we may utilize a further symmetry
with respect to simultaneous sign change of all entries in the vector
$\bm{x}$ representing the design point, i.e.\ $d(\bm{x})$ active rules
are changed to become inactive and vice versa. The corresponding
transformation matches orbits $\mathcal{O}_k$ and $\mathcal{O}_{K-k}$
for $0\leq k \leq K/2$.  Then, under the full group of permutations
and symmetry, there are $K/2+1$ or $(K+1)/2$ orbits, if $K$ is even or
odd, respectively, which consist of all items with $k$ or $K-k$ active
rules. It follows, that for the invariant optimal design
$\bar{w}_{k}=\bar{w}_{K-k}$ can be chosen.

\section{General Structure of the Information Matrix}
\label{sec:StructureOfInfo}
As usual under the current parametrization the entries of the 
information matrix $\mathbf{M}(\xi)$ can be considered as moments 
\begin{equation*}
  \sum_{i=1}^{n}w_{i}x_{ij}^{r}x_{ik}^{s}x_{i\ell}^{t}x_{im}^{u}\,,
  \quad j, k, \ell, m \in\{1,\ldots,K\},\,
  r,s,t,u\in\{0,1,2\}\,,
\end{equation*}
with respect to the design $\xi$, where the $x_{ik}$ denote the
entries in $\bm{x}_i$.  The diagonal entries of the information matrix
are given by
\begin{equation*}
  \sum_{i=1}^{n}w_{i}1 = \sum_{i=1}^{n}w_{i}x_{ik}^{2}
  =\sum_{i=1}^{n}w_{i}x_{ij}^{2}x_{ik}^{2} = 1\,,
  \quad j, k \in\{1,\ldots,K\},\, j\neq k\,.
\end{equation*}
Moreover, for designs invariant with respect to permutations
there are only four further potentially different entries 
for the off-diagonal elements.
In the first row and first column  the first moments for the main
effects are
\begin{equation*}
  m_{1}(\bar{\xi})= \sum_{i=1}^{n}w_{i}x_{ik}, \quad k=1,\ldots,K\,,
\end{equation*}
while the first moments for the interactions as well as the mixed 
moments for the main effects become
\begin{equation*}
  m_{2}(\bar{\xi})=\sum_{i=1}^{n}w_{i}x_{ij}x_{ik}\,,
  \quad 1\leq j < k \leq K \,.
\end{equation*}
For the mixed main effect/interaction moments we obtain
$m_1(\bar{\xi})$, when the main effect factor is involved in the 
interaction, and
\begin{equation*}
  m_{3}(\bar{\xi})=\sum_{i=1}^{n}w_{i}x_{ij}x_{ik}x_{i\ell}\,,
  \quad 1\leq j < k < \ell \leq K\,,
\end{equation*}
otherwise. For the mixed interaction moments we get 
$m_2(\bar{\xi})$, when there is a common factor in both 
interactions, and
\begin{equation*}
  m_{4}(\bar{\xi})=\sum_{i=1}^{n}w_{i}x_{ij}x_{ik}x_{i\ell}x_{im}\,,
  \quad 1\leq j < k < \ell < m \leq K
  \,,
\end{equation*}
if all factors are different.

Even though $m_{1}$ through $m_{4}$ depend on the design $\bar{\xi}$,
we will omit the argument for the sake of brevity, when this does not
cause confusion.  With this notation the information matrix becomes
\begin{equation*}
  \mathbf{M}(\bar{\xi}) = \begin{pmatrix}
    1 & m_{1} \bm{1}_{K}^\top & m_{2}
    \bm{1}_{C(K,2)}^\top\\
    m_{1} \bm{1}_{K} & \mathbf{M}_{11} & \mathbf{M}_{12}\\
    m_{2} \bm{1}_{C(K,2)} & \mathbf{M}_{12}^\top & \mathbf{M}_{22}
  \end{pmatrix}\,,
\end{equation*}
where $\bm{1}_\ell$ denotes a $\ell$-dimensional vector with all entries equal to $1$.
The block $\mathbf{M}_{11}$ corresponding to the main effects has the form
\begin{equation*}
  \mathbf{M}_{11}
  = (1 - m_{2}) \mathbf{I}_{K} + m_{2}
  \bm{J}_{K}\,,
\end{equation*}
where $\mathbf{I}_\ell$ and $\mathbf{J}_\ell$ denote 
the $\ell\times\ell$ identity matrix and the $\ell\times\ell$
matrix with all entries equal to $1$, respectively.
The blocks involving interactions are given by
\begin{equation*}
  \mathbf{M}_{12}
  = (m_{1}-m_{3})\mathbf{S}_{K}^\top + m_{3}
  \bm{1}_{K}\bm{1}_{C(K,2)}^\top
\end{equation*}
and
\begin{equation*}
  \mathbf{M}_{22}
  = (1 - 2m_{2} + m_{4})\mathbf{I}_{C(K,2)}
  +(m_{2} - m_{4})
  \mathbf{S}_{K}\mathbf{S}_{K}^\top
  +m_{4}\bm{J}_{C(K,2)}\,,
\end{equation*}
where the $C(K,2)\times K$-matrix $\mathbf{S}_{K}$ contains only
entries equal to $1$ or $0$ indicating whether the corresponding main
effect is involved in the associated interaction -- or not. For
illustrative purposes we exhibit this matrix in the case $K=6$:
\setcounter{MaxMatrixCols}{15}
\begin{equation*}
  \mathbf{S}_{6}^\top
  = \begin{pmatrix}
    1&1&1&1&1&0&0&0&0&0&0&0&0&0&0\\
    1&0&0&0&0&1&1&1&1&0&0&0&0&0&0\\
    0&1&0&0&0&1&0&0&0&1&1&1&0&0&0\\
    0&0&1&0&0&0&1&0&0&1&0&0&1&1&0\\
    0&0&0&1&0&0&0&1&0&0&1&0&1&0&1\\
    0&0&0&0&1&0&0&0&1&0&0&1&0&1&1
  \end{pmatrix}\,.
\end{equation*}

Because the matrix $\mathbf{S}_K$ has $2$ entries equal to one in each
row and $K-1$ in each column, we get
\begin{equation*}
  \mathbf{S}_{K}\bm{1}_{K} = 2\cdot\bm{1}_{C(K,2)}
  \quad\text{and}\quad
  \mathbf{S}_{K}^\top\bm{1}_{C(K,2)} = (K-1)\bm{1}_{K}
\end{equation*}
and, furthermore,
\begin{equation*}
  \mathbf{S}_{K}^\top\mathbf{S}_{K}
  = (K-2)\mathbf{I}_{K} + \bm{J}_{K}\,.
\end{equation*}
For further use we note that the vector $\bm{1}_{K}$ is an eigenvector
of $\mathbf{S}_{K}^\top\mathbf{S}_{K}$ with corresponding eigenvalue
$2(K-1)$, and the remaining $K-1$ eigenvalues of
$\mathbf{S}_{K}^\top\mathbf{S}_{K}$ are all equal to $K-2$ with
corresponding eigenvectors, which are orthogonal to $\bm{1}_{K}$.  As
a consequence, these values are also the non-zero eigenvalues of
$\mathbf{S}_{K}\mathbf{S}_{K}^\top$, where the eigenvalue $2(K-1)$
corresponds to the eigenvector $\bm{1}_{C(K,2)}$.  By this observation,
the eigenvalues of the diagonal block $\mathbf{M}_{22}$ of the
information matrix associated with the interactions can be determined
as
$1-2m_{2} +m_{4}+ 2(K-1)(m_{2}-m_{4}) + C(K,2)m_{4}
=1+2(K-2)m_{2}+(K-2)(K-3)m_{4}/2 $ with multiplicity one, corresponding
to the eigenvector $\bm{1}_{C(K,2)}$,
$ 1-2m_{2} +m_{4}+ (K-2)(m_{2}-m_{4}) = 1+(K-4)m_{2} - (K-3) m_{4} $
with multiplicity $K-1$, corresponding to the remaining eigenvectors of
$\bm{S}_{K}\bm{S}_{K}^\top$, and $ 1-2m_{2}+m_{4} $ with multiplicity
$C(K,2)-K=K(K-3)/2$, corresponding to the eigenvectors orthogonal to
the previous ones.
  
As has be seen before, the information matrix of an invariant design
can be written as a weighted sum of the information matrices of the
orbits. For the information matrices of the orbits the entries can be
calculated combinatorially by counting the number of terms in the sums
which are equal to $+1$ and $-1$, respectively. While the diagonal
entries in the information matrices are all equal to $1$, the
off-diagonal entries are determined by the moments, which, in general,
can be derived as
\begin{equation}
  \label{eq:4}
  m_{j}(\bar{\xi}_{k})
  =C(K,k)^{-1}\sum_{i=0}^{j}(-1)^{i+j}C(j,i) C(K-j,k-i)\,.
\end{equation}
In particular, we obtain
\begin{align}
  \label{eq:def:mbar1k}
  m_{1}(\bar{\xi}_{k})
  &
    = \frac{2 k - K}{K} \,, \\
  \label{eq:def:mbar2k}
  m_{2}(\bar{\xi}_{k})
  &
    = \frac{(2 k - K)^2  - K}{K(K - 1)} \,, \\
  \label{eq:def:mbar3k}
  m_{3}(\bar{\xi}_{k})
  &
  =\frac{(2 k - K)^3 - (3 K-2)(2k-K)}{K(K - 1)(K - 2)} \,, 
  \quad \text{and}\\
  \label{eq:def:mbar4k}
  m_{4}(\bar{\xi}_{k})
  &=\frac{(2k-K)^4 - (6K-8)(2k-K)^2 + 3K(K-2)}{K(K-1)(K-2)(K-3)}
\end{align}
for $j \leq K$, and $m_{j}(\bar{\xi}_{k})=0$ otherwise.  Obviously,
for $j \leq K$, the moments $m_{j}(\bar{\xi}_{k})$ are polynomials in
$k$ of degree $j$ with positive leading terms, which are symmetric
with respect to $K/2$.
\section{Symmetric case}
\label{sec:symmetric-case}
First note that 
$m_{j}(\bar{\xi}_{K-k})=-m_{j}(\bar{\xi}_{k})$
for $j$ odd and
$m_{j}(\bar{\xi}_{K-k})=m_{j}(\bar{\xi}_{k})$
for $j$ even, respectively.

Additionally in the case of symmetric constraints ($L+U=K$) we have
equal weights $\bar{w}_{K-k}=\bar{w}_{k}$ for symmetric invariant
designs.  For these designs follows immediately that their odd moments
vanish ($m_{1}=m_{3}=0$) and hence we have
$\mathbf{M}_{12}=\mathbf{0}$ for the off-diagonal block in the
information matrix.  Thus the information matrix simplifies to a
chess-board structure,
\begin{equation}
  \label{eq:infomatrixsymcase}
  \mathbf{M}(\bar{\xi})
  =\begin{pmatrix}
    1&\bm{0}&m_{2}\bm{1}_{C(K,2)}^\top\\
    \bm{0}&\mathbf{M}_{11}&\bm{0}\\
    m_{2}\bm{1}_{C(K,2)}&\bm{0}&\mathbf{M}_{22}
  \end{pmatrix} \,,
\end{equation}
where $\bm{0}$ denotes a vector or a matrix of appropriate size with
all entries equal to $0$.

The determinant of this matrix can be calculated by using standard
formulae as
\begin{align*}
  \det\left(\mathbf{M}(\bar{\xi})\right)
      &=\det(\mathbf{M}_{11})
        \det(\mathbf{M}_{22}-m_{2}^{2}\bm{J}_{C(K,2)})\,.
\end{align*}
For the first determinant on the right-hand side we have
\begin{align*}
\det(\mathbf{M}_{11})
      &=(1+(K-1)m_{2})(1-m_{2})^{K-1} \,.
\end{align*}
For the second determinant observe that the eigenvalue of the
matrix $\mathbf{M}_{22}-m_{2}^{2}\bm{J}_{C(K,2)}$ associated with the
eigenvector $\bm{1}_{C(K,2)}$ equals that of $\mathbf{M}_{22}$ reduced
by $m_{2}^{2}C(K,2)$, while the remaining eigenvalues stay the same.
As a consequence the determinant can be obtained as
\begin{align*}
  \det(&\mathbf{M}_{22}-m_{2}^{2}\bm{J}_{C(K,2)})\\
         &=(1+2(K-2)m_{2}
         +\textstyle{\frac{1}{2}}(K-2)(K-3)m_{4}
         - \textstyle{\frac{1}{2}}K(K-1)m_{2}^2)\\
       &\qquad\times
         \left(1+(K-4)m_{2} - (K-3) m_{4}\right)^{K-1}\\
       &\qquad\qquad\times
         (1-2m_{2}+m_{4})^{K(K-3)/2}\,.
\end{align*}

The occurring eigenvalues are all nonnegative because the information
matrix is nonnegative definite.  For estimability of all parameters
it has to be shown that these eigenvalues are all positive.

The following lemma establishes necessary and sufficient conditions
for the information matrix of an invariant symmetric design to be
nonsingular.  For this we denote by
$\tilde\xi_k=(\bar\xi_k+\bar\xi_{K-k})/2$ the uniform design on the
symmetric orbit
$\tilde{\mathcal{O}}_{k}=\mathcal{O}_{k}\cup\mathcal{O}_{K-k}$.  Note
that for $K$ even the symmetric orbit for $k=K/2$ is degenerate,
$\tilde{\mathcal{O}}_{K/2}=\mathcal{O}_{K/2}$.

\begin{lemma}
  \label{lem:regularity}
  Let $K\geq 2$.  For a symmetric invariant design $\bar\xi$ the
  information matrix $\mathbf{M}(\bar{\xi})$ is nonsingular if only if
  $\bar{\xi}$ is supported on, at least, two distinct symmetric
  orbits $\tilde{\mathcal{O}}_k$ and $\tilde{\mathcal{O}}_{\ell}$,
  $0\leq k < \ell \leq K/2$, where either $K\leq 3$, $k>0$ or
  $2\leq\ell<K/2$.
\end{lemma}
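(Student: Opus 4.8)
The plan is to read off nonsingularity from the factorisation of $\det(\mathbf{M}(\bar\xi))$ already derived above for the chess-board form \eqref{eq:infomatrixsymcase}. That factorisation presents $\det(\mathbf{M}(\bar\xi))$ as a product of powers of five quantities: $a_{1}=1+(K-1)m_{2}$ and $a_{2}=1-m_{2}$ (the eigenvalues of $\mathbf{M}_{11}$, with multiplicities $1$ and $K-1$), and $\lambda_{1}=1+2(K-2)m_{2}+\tfrac{1}{2}(K-2)(K-3)m_{4}-\tfrac{1}{2}K(K-1)m_{2}^{2}$, $\lambda_{2}=1+(K-4)m_{2}-(K-3)m_{4}$, $\lambda_{3}=1-2m_{2}+m_{4}$ (the eigenvalues of $\mathbf{M}_{22}-m_{2}^{2}\bm{J}_{C(K,2)}$, with multiplicities $1$, $K-1$ and $K(K-3)/2$); for $K=2$, where $C(K,2)=1$, one instead has $\det(\mathbf{M}(\bar\xi))=a_{1}^{2}a_{2}^{2}$. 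Since $\mathbf{M}(\bar\xi)$ is nonnegative definite, it is nonsingular precisely when every one of these quantities that carries a positive exponent is strictly positive, so the problem reduces to a sign analysis as the weights of $\bar\xi$ vary over the symmetric orbits.

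First I would reparametrise. Write $\bar\xi=\sum_{k=0}^{\lfloor K/2\rfloor}v_{k}\tilde\xi_{k}$ with $v_{k}\ge 0$ and $\sum_{k}v_{k}=1$, and put $t_{k}:=(2k-K)^{2}$; the numbers $t_{k}$ are pairwise distinct, lie in $[0,K^{2}]$, and satisfy $t_{k}=K^{2}$ only for $k=0$, $t_{k}=(K-2)^{2}$ only for $k=1$ (when $K\ge 4$ and $k\le K/2$), and $t_{k}=0$ only for $k=K/2$ (when $K$ is even). By the moment formulas \eqref{eq:def:mbar2k} and \eqref{eq:def:mbar4k}, $m_{2}(\bar\xi)$ is affine in $\bar t:=\sum_{k}v_{k}t_{k}$ and $m_{4}(\bar\xi)$ is affine in $\bar t$ and $\overline{t^{2}}:=\sum_{k}v_{k}t_{k}^{2}$. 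Substituting and clearing denominators, a direct computation collapses the relevant quantities to $a_{1}=\bar t/K$, $a_{2}=(K^{2}-\bar t)/(K(K-1))$ and
\begin{align*}
  2K(K-1)\,\lambda_{1} &= \overline{t^{2}}-\bar t^{2} \;=\; \sum_{k<\ell}v_{k}v_{\ell}(t_{k}-t_{\ell})^{2},\\
  K(K-1)(K-2)\,\lambda_{2} &= \sum_{k}v_{k}\,t_{k}\,(K^{2}-t_{k}),\\
  K(K-1)(K-2)(K-3)\,\lambda_{3} &= \sum_{k}v_{k}\,(t_{k}-K^{2})\bigl(t_{k}-(K-2)^{2}\bigr).
\end{align*}
The cancellation of all lower-order terms in these identities, and the factorisation $t^{2}-2(K^{2}-2K+2)t+K^{2}(K-2)^{2}=(t-K^{2})(t-(K-2)^{2})$ behind the last line, are the crux of the argument; conceptually $C(K,2)\lambda_{1}$ is the Schur complement of the intercept in the direction of the sum of all interactions, hence the variance under $\bar\xi$ of $\sum_{j<k}x_{j}x_{k}=\tfrac{1}{2}\bigl((x_{1}+\dots+x_{K})^{2}-K\bigr)$, and $(x_{1}+\dots+x_{K})^{2}$ equals $t_{k}$ on the orbit $\tilde{\mathcal{O}}_{k}$, which explains the variance form above.

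Next comes the sign analysis; let $S$ be the set of indices $k\le K/2$ with $v_{k}>0$. Each summand above is nonnegative: $t_{k}(K^{2}-t_{k})\ge 0$ because $0\le t_{k}\le K^{2}$, and $(t_{k}-K^{2})(t_{k}-(K-2)^{2})\ge 0$ because for $k\le K/2$ one has $k=0$ (first factor zero), $k=1$ (second factor zero), or $k\ge 2$ (both factors negative). Hence $\lambda_{1}>0$ iff $|S|\ge 2$; in that case at most one charged $t_{k}$ vanishes and at most one equals $K^{2}$, so $a_{1},a_{2}>0$ automatically. Likewise $\lambda_{2}>0$ iff $S$ contains some $k$ with $t_{k}\notin\{0,K^{2}\}$, that is with $k\ne 0$ and $k\ne K/2$; and $\lambda_{3}>0$ iff $S$ contains some $k\ge 2$. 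For $K\le 3$ we have $m_{4}\equiv 0$ and nonsingularity reduces to $\lambda_{1}>0$: for $K=2$ because $\det(\mathbf{M}(\bar\xi))=a_{1}^{2}a_{2}^{2}=\lambda_{1}^{2}$; for $K=3$ because $\lambda_{3}$ has multiplicity $0$ and, $S\subseteq\{0,1\}$ being the only possibility, $|S|\ge 2$ already yields $\lambda_{2}>0$ (and $a_{1},a_{2}>0$). So for $K\le 3$ nonsingularity is exactly $|S|\ge 2$, which matches the lemma there.

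Finally let $K\ge 4$. Combining the three equivalences, $\mathbf{M}(\bar\xi)$ is nonsingular iff $|S|\ge 2$, $S\not\subseteq\{0,K/2\}$, and $S$ meets $\{2,\dots,\lfloor K/2\rfloor\}$. It remains to recast this as: $S$ contains indices $k<\ell$ with $k>0$ or $2\le\ell<K/2$. If $0\notin S$, any two indices $k<\ell$ in $S$ have $k\ge 1$. If $0\in S$ and $|S|\ge 3$, two nonzero indices $k<\ell$ in $S$ have $k\ge 1$. If $S=\{0,\ell\}$, the two conditions $S\not\subseteq\{0,K/2\}$ and $S\cap\{2,\dots\}\ne\emptyset$ force $\ell\ne K/2$ and $\ell\ge 2$, i.e.\ $2\le\ell<K/2$ (as $\ell\le\lfloor K/2\rfloor$), and conversely such an $\ell$ supplies all three conditions. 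The converse implication is immediate: $k<\ell\le K/2$ with $k>0$ give $k\notin\{0,K/2\}$ and $\ell\ge 2$, while $2\le\ell<K/2$ gives $\ell\ge 2$ and $\ell\notin\{0,K/2\}$. The step I expect to cost the most effort is verifying the three displayed identities (the cancellations and the quadratic factorisation); granting those, the sign analysis and the combinatorial bookkeeping — including the degenerate orbit $\tilde{\mathcal{O}}_{K/2}$ and the cases $K\le 3$ — are routine.
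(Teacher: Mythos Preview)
Your proof is correct and follows the same overall strategy as the paper's: both reduce nonsingularity to the positivity of the five eigenvalue expressions $a_1,a_2,\lambda_1,\lambda_2,\lambda_3$ and then analyse each as a function over the symmetric orbits. The paper does this via three separate auxiliary lemmas (Lemmas~\ref{lem:eigenvalue3}--\ref{lem:eigenvalue1}), arguing for $\lambda_{\bm 1}$ by strict concavity of $m_2\mapsto m_2^2$, and for $\lambda_{\mathbf S}$ and $\lambda_{\mathbf I}$ by identifying each $\lambda(\bar\xi_k)$ as a degree-four polynomial in $k$ and locating its roots; nonsingularity of $\mathbf{M}_{11}$ is imported from \citet{FreiseHollingSchwabe:2018prea}.

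Your reparametrisation $t_k=(2k-K)^2$ is a genuine streamlining: it collapses all five quantities into closed forms in $\bar t$ and $\overline{t^2}$ simultaneously, turning $\lambda_1$ into a variance and producing explicit linear factorisations $t_k(K^2-t_k)$ and $(t_k-K^2)(t_k-(K-2)^2)$ for $\lambda_2$ and $\lambda_3$, from which the root sets $\{0,K/2\}$ and $\{0,1\}$ are read off without a separate polynomial argument. You also handle $a_1,a_2$ directly rather than by citation, and treat the low-dimensional cases $K\le 3$ explicitly via the vanishing multiplicities. The paper's modular presentation isolates each eigenvalue in its own lemma (convenient for later reference), while your approach buys a shorter, self-contained argument with all five sign conditions derived uniformly from one substitution.
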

Note that it follows from Lemma~\ref{lem:regularity} that for $K=2$ and $K=3$ 
the full $2^K$ factorial would be required 
for estimability of the parameters,
and that for a symmetric invariant design with   more than two different 
symmetric orbits estimability of all parameters is always ensured.

If the information matrix $\mathbf{M}(\bar{\xi})$ is nonsingular, 
its inverse has the same chess-board structure,
\begin{equation}
  \label{eq:inverseofinfo}
  \mathbf{M}(\bar{\xi})^{-1}
  =\begin{pmatrix}
    c_{0}&\bm{0}&-c_{2}\bm{1}_{C(K,2)}^\top\\
    \bm{0}&\mathbf{M}_{11}^{-1}&\bm{0}\\
    -c_{2}\bm{1}_{C(K,2)}&\bm{0}&\mathbf{C}_{22}\\
  \end{pmatrix}\,,
\end{equation}
where
\begin{align*}
  c_{2} &= \frac{2m_{2}}{
          2 +4(K-2) m_{2} +
          (K-2)(K-3)m_{4}-K(K-1)m_{2}^2}\,,\\
  c_{0} &= 1 + c_{2}C(K,2)m_{2}\,,\\
  \mathbf{M}_{11}^{-1}
        &= \frac{1}{1-m_{2}} \left(\mathbf{I}_{K}
          -\frac{m_{2}}{1+(K-1)m_{2}}\mathbf{J}_{K}\right)\,,\\
\end{align*}
and
\begin{equation*}
  \mathbf{C}_{22}
        =\frac{1}{1-2m_{2}+m_{4}}\left(
          \mathbf{I}_{C(K,2)}
          -\delta_{\mathbf{S}}\mathbf{S}_{K}\mathbf{S}_{K}^\top
          -\delta_{\mathbf{J}}\mathbf{J}_{C(K,2)}\right)\,.
\end{equation*}
The coefficients $\delta_{\mathbf{S}}$ and $\delta_{\mathbf{J}}$ are given by
\begin{equation*}
  \delta_{\mathbf{S}}=\frac{m_{2}-m_{4}}{1+(K-4)m_{2}-(K-3)m_{4}}
\end{equation*}
and
\begin{equation*}
\delta_{\mathbf{J}} =\frac{2m_{4} -
               4\delta_{\mathbf{S}}((K-3)m_{4}+2m_{2})-2c_{2}m_{2}(1-2m_{2}+m_{4})}%
               {2+ 4(K - 2)m_{2} + (K - 2)(K - 3)m_{4}}\,.
\end{equation*}
That the matrix in~\eqref{eq:inverseofinfo} indeed is the inverse of
the information matrix, can be verified by straightforward
multiplication of the matrices.

\section{Optimal Designs}
\label{sec:DoptDesign}
Without constraints on the design region the full factorial design,
which assigns equal weights $2^{-K}$ to each of the $2^K$ vertices, may
be used.  The information matrix of the full factorial design is equal
to the identity matrix $\mathbf{I}_p$, and the full factorial is
well-known to be optimal with respect to a variety of criteria
including $D$-optimality.  Hence, any design $\xi$ satisfying
$\mathbf{M}(\xi)=\mathbf{I}_p$ will be optimal on the unrestricted
design region and, by a majorization argument, also optimal on a 
restricted design region as long as the support of $\xi$ is
included in that design region.  Thus for finding an optimal design
$\xi^*$ on $\mathcal{X}$ it would be sufficient to show that
$\mathbf{M}(\xi^*)=\mathbf{I}_p$ or, equivalently, for a symmetric
invariant design $\bar\xi^*$ that $m_2(\bar\xi^*)=0$ and
$m_4(\bar\xi^*)=0$.

In particular, if $K\geq 6$ is even and $L=1$ we may take the regular
half fraction
$\bar\xi=2^{-(K-1)}\sum_{j=1}^{K/2}C(K,2j-1)\bar\xi_{2j-1}$ which is
uniform on all settings belonging to the odd orbits
$\mathcal{O}_1, \mathcal{O}_3, \ldots, \mathcal{O}_{K-1}$.  This half
fraction $\bar{\xi}$ has an information matrix equal to the identity
and is thus optimal on the restricted design region
$\mathcal{X}_{1,K-1}$.

In general, for larger $L$ the search for such designs may be more
complicated.  If $L$ becomes too large, then the constraints may
become so severe that the condition $\mathbf{M}(\xi)=\mathbf{I}_p$
cannot be met by any design on $\mathcal{X}_{L,K-L}$, and optimal
designs have to be characterized in another way.

Moreover, it would be desirable to reduce the number of support
points, i.e.\ the number of orbits for a symmetric invariant optimal
design.  Therefore, we first establish a result which provides optimal
designs supported on, at most, three symmetric orbits for small to
moderate thresholds $L$.
Then for $L$ up to a suitable threshold the following results shows
that there exist optimal designs on $\mathcal{X}_{L,K-L}$ which are
equally good as the full factorial design.  We start with the
situation where $L$ is equal to the threshold.  There symmetric
invariant designs turn out to be optimal which are supported on the
two symmetric orbits with minimal and maximal number of active rules
or with (nearly) half of the rules active, respectively.
\newpage
\begin{lemma}
  \label{lem:LequalB}
  Let either
  \begin{enumerate}
  \item\label{lem:LequalBeven} $K$ be even, $L=(K-\sqrt{3K-2})/{2}$,
    $\bar{w}_L=K/(2(3K-2))$, and
    $\bar{\xi}=\bar{w}_L\bar{\xi}_L+(1-2\bar{w}_L)\bar{\xi}_{K/2}+\bar{w}_L\bar{\xi}_{K-L}$,
    or
  \item\label{lem:LequalBodd}
    $K$ be odd, $L=(K-\sqrt{3K})/{2}$,
    $\bar{w}_L=(K-1)/(2(3K-1))$, and
    $\bar{\xi}=\bar{w}_L\bar{\xi}_L+
      (1/2-\bar{w}_L)\bar{\xi}_{(K-1)/2}
      +(1/2-\bar{w}_L)\bar{\xi}_{(K+1)/2}
      +\bar{w}_L\bar{\xi}_{K-L}$.
  \end{enumerate}
  Then the information matrix $\mathbf{M}(\bar{\xi})$ is equal to the
  identity $\mathbf{I}_p$.
\end{lemma}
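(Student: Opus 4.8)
The plan is to use the reduction pointed out just before the statement: for a symmetric invariant design $\bar{\xi}$ the odd moments $m_1(\bar{\xi})=m_3(\bar{\xi})=0$ vanish automatically, so by~\eqref{eq:infomatrixsymcase} together with the forms of $\mathbf{M}_{11}$ and $\mathbf{M}_{22}$ recorded in Section~\ref{sec:StructureOfInfo}, the information matrix $\mathbf{M}(\bar{\xi})$ equals $\mathbf{I}_p$ if and only if $m_2(\bar{\xi})=0$ and $m_4(\bar{\xi})=0$. Since $\mathbf{M}(\bar{\xi})=\sum_k\bar{w}_k\mathbf{M}(\bar{\xi}_k)$, it therefore suffices to verify the two scalar identities $\sum_k\bar{w}_k m_2(\bar{\xi}_k)=0$ and $\sum_k\bar{w}_k m_4(\bar{\xi}_k)=0$ for the weights given in the statement.

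First I would abbreviate $t_k=2k-K$, so that by~\eqref{eq:def:mbar2k} and~\eqref{eq:def:mbar4k} the moments $m_2(\bar{\xi}_k)$ and $m_4(\bar{\xi}_k)$ are polynomials in $t_k^2$; in particular $t_{K-k}^2=t_k^2$, so the two extreme orbits $\bar{\xi}_L$ and $\bar{\xi}_{K-L}$ contribute identically and the three-orbit (resp.\ four-orbit) design collapses to the two effective coefficients $2\bar{w}_L$ and $1-2\bar{w}_L$. The point of the prescribed value of $L$ is that it renders $t_L^2$ an affine function of $K$: $t_L^2=3K-2$ when $K$ is even and $t_L^2=3K$ when $K$ is odd. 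Substituting this, together with the central values $t_{K/2}=0$ (for even $K$) or $t_{(K-1)/2}^2=t_{(K+1)/2}^2=1$ (for odd $K$), into~\eqref{eq:def:mbar2k} and~\eqref{eq:def:mbar4k} makes the numerators factor and yields simple closed forms; for even $K$ one gets
\begin{equation*}
  m_2(\bar{\xi}_L)=\frac{2}{K},\quad m_2(\bar{\xi}_{K/2})=\frac{-1}{K-1},\quad
  m_4(\bar{\xi}_L)=\frac{-6}{K(K-3)},\quad m_4(\bar{\xi}_{K/2})=\frac{3}{(K-1)(K-3)},
\end{equation*}
and entirely analogous expressions for odd $K$.

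With these closed forms in hand, the equation $\sum_k\bar{w}_k m_2(\bar{\xi}_k)=0$ is linear in $\bar{w}_L$ and is solved exactly by $\bar{w}_L=K/(2(3K-2))$ in the even case and by $\bar{w}_L=(K-1)/(2(3K-1))$ in the odd case, which are precisely the values in the statement; substituting the same $\bar{w}_L$ into $\sum_k\bar{w}_k m_4(\bar{\xi}_k)$ one sees that the two remaining fractions cancel, so $m_4(\bar{\xi})=0$ as well. It then remains only to observe that the resulting weight vectors are genuine probability vectors, i.e.\ $0<\bar{w}_L<1/2$ (so that $1-2\bar{w}_L>0$, resp.\ $1/2-\bar{w}_L>0$), which follows from $K>1$. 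For $K\le 3$ the constraint $m_4(\bar{\xi})=0$ is vacuous, there being no four-factor interactions, and the design reduces to the full factorial, consistent with Lemma~\ref{lem:regularity}.

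I do not anticipate a genuine obstacle; the statement is a verification. The only steps needing mild care are justifying the sufficiency of $m_2=m_4=0$ at the outset --- immediate from~\eqref{eq:infomatrixsymcase} and the forms of $\mathbf{M}_{11}$ and $\mathbf{M}_{22}$ --- and keeping the polynomial algebra in the $m_4$-numerators organized so that factorizations such as $-6(K-1)(K-2)$ and $3K(K-2)$ are transparent rather than hidden behind a brute-force expansion. Carrying out the arithmetic throughout in the single variable $t_k^2$ keeps the cancellations in both $m_2(\bar{\xi})$ and $m_4(\bar{\xi})$ immediate.
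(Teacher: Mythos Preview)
Your proposal is correct and follows exactly the approach the paper takes: the paper's proof is the single sentence ``The proof follows by straightforward calculation of $m_2(\bar{\xi})=m_4(\bar{\xi})=0$, which establishes $\mathbf{M}(\bar{\xi})=\mathbf{I}_p$,'' and you have simply written out that calculation in detail. Your intermediate closed forms and the resulting cancellations are all correct.
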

The proof follows by straightforward calculation of
$m_2(\bar{\xi})=m_4(\bar{\xi})=0$, which establishes
$\mathbf{M}(\bar{\xi})=\mathbf{I}_p$.

The conditions of Lemma~\ref{lem:LequalB} are met only in rare cases.
For $K=3$ we recover the $2^3$ full factorial, and for $K=6$ the given
design is the $2^{6-1}$ fractional factorial on the odd orbits
mentioned above.  For $K=22$ we obtain an optimal design concentrated
on three orbits with $7$, $11$, and $15$ active rules, but with
unequal weights on the individual settings.  Similarly, for $K=27$ the
optimal design is supported on four orbits with $9$, $13$, $14$, and
$18$ active rules, where also the weights differ between the settings
of the outer and inner orbits.

For notational convenience we introduce the abbreviation $B_{K}$
for the threshold occurring in Lemma~\ref{lem:LequalB}:
\begin{equation*}
  B_{K} =
  \begin{cases}
    \frac{K-\sqrt{3K-2}}{2},&K\quad\text{even}\\
    \frac{K-\sqrt{3K}}{2},&K\quad\text{odd}\,.
  \end{cases}
\end{equation*}
When $L$ is less or equal to the threshold $B_K$, then, in general, at
least three symmetric orbits are required.
\begin{theorem}
  \label{thm:DoptEffAsFactorial}
  Let $L\leq B_K$, then there exist symmetric invariant designs
  $\bar{\xi}^*$ with $\mathbf{M}(\bar{\xi}^*)=\mathbf{I}_p$ which are
  supported on, at most, three symmetric orbits in
  $\mathcal{X}_{L,K-L}$.
\end{theorem}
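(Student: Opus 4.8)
The plan is to reduce the claim to a statement about the existence of a probability vector on the allowed orbits whose moments $m_2$ and $m_4$ both vanish. By the discussion preceding the theorem, a symmetric invariant design $\bar\xi=\sum_k \bar w_k\bar\xi_k$ (with $\bar w_k=\bar w_{K-k}$ and $\bar w_k=0$ for $k<L$ or $k>K-L$) satisfies $\mathbf{M}(\bar\xi)=\mathbf{I}_p$ precisely when $m_2(\bar\xi)=\sum_k\bar w_k m_2(\bar\xi_k)=0$ and $m_4(\bar\xi)=\sum_k\bar w_k m_4(\bar\xi_k)=0$, since the odd moments vanish automatically for symmetric designs and these two conditions force the chess-board information matrix in~\eqref{eq:infomatrixsymcase} to collapse to the identity. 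So it suffices to exhibit weights supported on at most three symmetric orbits solving this pair of linear equations together with $\sum_k\bar w_k=1$, $\bar w_k\ge 0$.

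First I would pass to the variable $t_k=(2k-K)^2$, which ranges over the values $(2k-K)^2$ for $L\le k\le K-L$; note $t_{k}=t_{K-k}$, so symmetric orbits are indexed by the distinct values of $t$. By the explicit formulas~\eqref{eq:def:mbar2k} and~\eqref{eq:def:mbar4k}, $m_2(\bar\xi_k)$ is an affine function of $t_k$ and $m_4(\bar\xi_k)$ is a quadratic (affine in $t_k$ and $t_k^2$). Thus the problem becomes: find a probability measure on the admissible set of $t$-values whose first moment (in $t$) equals the value $t^\star$ making the affine expression for $m_2$ vanish, and whose mixed $t,t^2$ combination makes $m_4$ vanish. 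Equivalently, I would write both target equations as two moment conditions $\mathbb E[g_1(t)]=0$, $\mathbb E[g_2(t)]=0$ with $g_1$ affine and $g_2$ quadratic in $t$; a measure supported on three points solving three linear constraints (the two moment equations plus total mass one) generically exists, and by a standard Carathéodory/Tchakaloff argument the point $(0,0,1)$ lies in the convex hull of the curve $\{(g_1(t),g_2(t),1):t \text{ admissible}\}$ as soon as it lies in the convex hull at all, with a representation using at most three points.

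The key remaining step, and the main obstacle, is to verify that $(0,0)$ actually lies in the relevant convex hull \emph{under the hypothesis $L\le B_K$} — i.e.\ that the constraint is not too severe. Here the value $B_K$ enters exactly: the smallest available $t$-value is $t_{\min}=(2L-K)^2$, and the largest is $t_{\max}=K^2$ (from $k=0$ or $k=K$ when $L=0$), or more precisely $t_{\max}$ corresponds to the outermost admissible orbit; the central orbit ($k=K/2$ or $k=(K\pm1)/2$) gives the smallest possible $t$, namely $0$ or $1$. I would check that the "boundary" two-point design of Lemma~\ref{lem:LequalB} is exactly the extreme case $L=B_K$: there the unique two-point solution on $\{t_{\min},t_{\text{central}}\}$ has nonnegative weights iff $L\le B_K$, because $\bar w_L=K/(2(3K-2))$ (resp.\ $(K-1)/(2(3K-1))$) stays in $[0,1/2]$ precisely in that range, and this pins down $t_{\min}$ via $t_{\min}=3K-2$ (even) or $3K$ (odd), which is equivalent to $(2L-K)^2=3K-2$, i.e.\ $L=B_K$. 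For $L<B_K$ the value $t_{\min}=(2L-K)^2>3K-2$ overshoots, so a two-point design on the extremes no longer works with the right sign, and one genuinely needs a third (intermediate) orbit; I would produce it by taking a convex combination that first uses the central orbit and one intermediate orbit to hit an auxiliary target, then mixes in $t_{\min}$, checking positivity of all three weights by a direct sign analysis of the resulting linear system. The bookkeeping — ensuring the intermediate $t$-value is itself admissible ($L\le k\le K-L$) and that all three weights are nonnegative throughout the range $L\le B_K$ — is the part requiring care, and I would handle the even and odd $K$ cases separately, mirroring the split in Lemma~\ref{lem:LequalB}, with the $K\le 3$ degenerate cases (where the full factorial is forced, per Lemma~\ref{lem:regularity}) treated by inspection.
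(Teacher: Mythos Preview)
Your overall plan coincides with the paper's: reduce to $m_2(\bar\xi)=m_4(\bar\xi)=0$ and build a design on the central symmetric orbit, the boundary orbit $\tilde{\mathcal O}_L$, and one intermediate orbit $\tilde{\mathcal O}_\ell$. The Carath\'eodory step correctly explains why three symmetric orbits suffice \emph{once} $(0,0)$ is known to lie in the relevant convex hull, but it does not help establish that containment --- for that you fall back on an explicit construction, just as the paper does.

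There is a labeling slip that propagates through the sketch: on $L\le k\le K-L$ the value $t_k=(2k-K)^2$ is \emph{maximal} at $k=L$ (equal to $(K-2L)^2$) and minimal at the center, so what you call $t_{\min}=(2L-K)^2$ is actually the largest admissible $t$. Your substantive observation --- that for $L<B_K$ the two-orbit design on boundary and center overshoots the target for $m_4$ --- survives the relabeling.

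The genuine gap is the step you dismiss as ``bookkeeping'': choosing the intermediate orbit $\ell$ and verifying nonnegativity of all weights. In the paper this is the heart of the argument. One needs an integer $\ell$ with $B_K\le \ell\le (K-\sqrt{K})/2$: the lower bound $\ell\ge B_K$ makes the auxiliary two-orbit design $\bar\xi_{(\ell)}$ on $\tilde{\mathcal O}_\ell$ and the center (chosen so that $m_2=0$) satisfy $m_4\le 0$, so that a convex combination with the analogous design $\bar\xi_{(L)}$ (which has $m_4\ge 0$ because $L\le B_K$) can reach $m_4=0$ with a mixing coefficient $\alpha\in[0,1]$; the upper bound $\ell\le (K-\sqrt{K})/2$ is what guarantees $\bar\xi_{(\ell)}$ itself has nonnegative central weight $1-2\bar w_\ell\ge 0$. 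That an \emph{integer} $\ell$ lies in this window is not automatic --- the paper verifies it by inspection for small $K$ (Table~\ref{tab:optOuterOrbitSym}) and for larger $K$ by checking $\sqrt{3K-2}-\sqrt{K}\ge 2$ (even case) or $\sqrt{3K}-\sqrt{K}\ge 2$ (odd case), so that the window has length at least one. Your sketch neither isolates these two-sided constraints on $\ell$ nor argues for the existence of such an integer, and without them the promised ``direct sign analysis'' of the three weights cannot be completed.
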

In the corresponding proof in the appendix particular designs will be
constructed which include the outmost and the central symmetric orbits
$\tilde{\mathcal{O}}_L$ and $\tilde{\mathcal{O}}_{K/2}$ or
$\tilde{\mathcal{O}}_{(K-1)/2}$, respectively. For $K$ up to $22$ a
list of such designs is provided in Table~\ref{tab:optOuterOrbitSym}.
Note that there the index $c$ stands for a central orbit with $c=K/2$
or $c=(K-1)/2$, respectively.

All these designs are optimal because their information matrices
coincide with that of a full factorial design, which is known to be
optimal on the unrestricted design region $\{-1,+1\}^K$.  By a
majorization argument we may thus state the following result.
\begin{corollary}
  \label{cor:OuterDopt}
  The designs specified in Theorem~\ref{thm:DoptEffAsFactorial} are
  $D$-optimal.
\end{corollary}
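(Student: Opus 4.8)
The plan is to deduce $D$-optimality of each design $\bar{\xi}^*$ furnished by Theorem~\ref{thm:DoptEffAsFactorial} directly from the identity $\mathbf{M}(\bar{\xi}^*)=\mathbf{I}_p$, via the Kiefer--Wolfowitz equivalence theorem~\eqref{eq:equivalencetheorem}. Since $\mathbf{M}(\bar{\xi}^*)=\mathbf{I}_p$ is nonsingular, $\bar{\xi}^*$ is $D$-optimal on $\mathcal{X}_{L,K-L}$ if and only if the sensitivity function satisfies $\psi(\bm{x})=\bm{f}(\bm{x})^\top\mathbf{M}(\bar{\xi}^*)^{-1}\bm{f}(\bm{x})\leq p$ for every $\bm{x}\in\mathcal{X}_{L,K-L}$, and with $\mathbf{M}(\bar{\xi}^*)^{-1}=\mathbf{I}_p$ this reduces to checking $\psi(\bm{x})=\|\bm{f}(\bm{x})\|^2\leq p$.

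First I would evaluate $\|\bm{f}(\bm{x})\|^2$ at an arbitrary vertex $\bm{x}\in\{-1,+1\}^K$. The vector $\bm{f}(\bm{x})$ is built from the constant $1$, the $K$ main-effect entries $x_k\in\{-1,+1\}$, and the $\binom{K}{2}$ interaction entries $x_jx_k\in\{-1,+1\}$; each of these $p=1+K(K+1)/2$ components squares to $1$, so $\|\bm{f}(\bm{x})\|^2=p$ identically on $\{-1,+1\}^K$. Hence $\psi(\bm{x})=p$ for all $\bm{x}\in\mathcal{X}_{L,K-L}\subset\{-1,+1\}^K$, the inequality in~\eqref{eq:equivalencetheorem} holds (indeed with equality), and the equivalence theorem gives $D$-optimality of $\bar{\xi}^*$ on the restricted region.

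An alternative route, matching the remark preceding the statement, is the majorization argument: every design on $\mathcal{X}_{L,K-L}$ is also a design on the full hypercube $\{-1,+1\}^K$, where the full factorial design with information matrix $\mathbf{I}_p$ is known to be $D$-optimal; therefore $\det\mathbf{M}(\xi)\leq\det\mathbf{I}_p=1$ for every design $\xi$ supported in $\mathcal{X}_{L,K-L}$, and Theorem~\ref{thm:DoptEffAsFactorial} exhibits a $\bar{\xi}^*$ attaining this value. One can even bypass the optimality of the full factorial altogether: every information matrix occurring here has trace $\sum_i w_i\|\bm{f}(\bm{x}_i)\|^2=p$, so by the arithmetic--geometric mean inequality its determinant is at most $1$, attained exactly when all eigenvalues equal $1$, i.e.\ by $\mathbf{I}_p$.

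There is essentially no obstacle: the statement is a corollary precisely because the substantive construction — a symmetric invariant design with $\mathbf{M}=\mathbf{I}_p$ whose support lies inside $\mathcal{X}_{L,K-L}$ — has already been carried out in Theorem~\ref{thm:DoptEffAsFactorial}. The only points worth spelling out are that the equivalence theorem requires nonsingularity of $\mathbf{M}(\bar{\xi}^*)$, which is immediate from $\mathbf{M}(\bar{\xi}^*)=\mathbf{I}_p$, and that, because the bound $\psi(\bm{x})\leq p$ is met with equality on the entire hypercube, the very same argument simultaneously yields optimality with respect to every criterion sharing the invariance properties discussed in Section~\ref{sec:ModelAndInfo}, in particular $A$- and $E$-optimality.
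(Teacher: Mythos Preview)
Your argument is correct. The paper itself does not give a separate proof of the corollary; it simply invokes the majorization argument you describe as your alternative route: since the full factorial with information matrix $\mathbf{I}_p$ is $D$-optimal on the unrestricted hypercube, any design supported in $\mathcal{X}_{L,K-L}$ with the same information matrix is optimal there as well. Your lead approach via the equivalence theorem is a slightly different but equally short route---it has the minor advantage of being self-contained (you verify $\psi(\bm{x})=\|\bm{f}(\bm{x})\|^2=p$ directly rather than citing optimality of the full factorial), and your AM--GM remark makes this even more explicit. Either way the substance is the same.
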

However, the designs in Theorem~\ref{thm:DoptEffAsFactorial} and
Corollary~\ref{cor:OuterDopt} typically need not be unique.  For
example, if $L\leq B_K-1$, then the outmost orbit
$\tilde{\mathcal{O}}_L$ can be replaced by a less extreme one, i.e.\
$\tilde{\mathcal{O}}_k$ with $L<k\leq B_K$, more orbits can be
included by mixing different optimal designs, and even the central
orbit may be replaced as in half fractions on odd orbits when $K$ is a
multiple of four.

For $K=6$ we get $B_{K}=1$ for the threshold. Therefore the results
cannot be applied to the example with $L=2$. But as was mentioned
before the $2^{6-1}$ fractional factorial design on the odd orbits is
optimal for $L\leq 1$.

For narrower constraints ($L>B_K$) the full information can no longer
be retained, and the $D$-optimal designs will result in an information
matrix different from the identity, i.e.\ $m_2\neq 0$ and/or
$m_4\neq 0$.
\begin{theorem}
  \label{thm:innerOrbits}
  Let $B_{K} < L <K/2$.  Then the symmetric invariant design
  $\bar{\xi}^*=\bar{w}_L^*\bar{\xi}_L+(1-2\bar{w}_L^*)\bar{\xi}_{K/2}
  +\bar{w}_L^*\bar{\xi}_{K-L}$ in the case $K$ even and
  $\bar{\xi}^*=\bar{w}_L^*\bar{\xi}_L+(1/2-\bar{w}_L^*)\bar{\xi}_{(K-1)/2}
  +(1/2-\bar{w}_L^*)\bar{\xi}_{(K+1)/2}+\bar{w}_L^*\bar{\xi}_{K-L}$ in
  the case $K$ odd with optimized weight $\bar{w}_L^*$ is $D$-optimal.
\end{theorem}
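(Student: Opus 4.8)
The plan is to reduce the optimization over all invariant designs on $\mathcal{X}_{L,K-L}$ to the one-parameter family described in the statement, and then verify optimality within that family via the Kiefer--Wolfowitz equivalence theorem \eqref{eq:equivalencetheorem}. First I would note that by the invariance argument in Section~\ref{sec:ModelAndInfo} it suffices to search among symmetric invariant designs, whose information matrix has the chess-board form \eqref{eq:infomatrixsymcase} and whose determinant, as computed in Section~\ref{sec:symmetric-case}, is an explicit function of the two moments $m_2=m_2(\bar\xi)$ and $m_4=m_4(\bar\xi)$ only. Since $\det(\mathbf{M}(\bar\xi))$ depends on $\bar\xi$ solely through $(m_2,m_4)$, the $D$-optimal design is found by maximizing that determinant over the set $\mathcal{M}=\{(m_2(\bar\xi),m_4(\bar\xi)) : \bar\xi \text{ symmetric invariant on } \mathcal{X}_{L,K-L}\}$, which is the convex hull of the points $(m_2(\bar\xi_k),m_4(\bar\xi_k))$, $L\le k\le K-L$, given by the explicit polynomials \eqref{eq:def:mbar2k} and \eqref{eq:def:mbar4k}.

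The key geometric step is to understand the shape of the curve $k\mapsto (m_2(\bar\xi_k),m_4(\bar\xi_k))$. Writing $t=(2k-K)^2\ge 0$, we have $m_2 = (t-K)/(K(K-1))$ and $m_4$ is quadratic in $t$ with positive leading coefficient, so the orbit moments lie on a convex parabolic arc parametrized by $t$, with $t$ ranging over $\{(2k-K)^2 : L\le k\le K-L\}$; the extreme values are $t=(K-2L)^2$ at the outmost orbits $k=L,K-L$ and $t=0$ (if $K$ even) or $t=1$ (if $K$ odd) at the central orbit(s). Because the arc is convex, its convex hull is the region between the chord joining the two endpoints and the arc itself, and maximizing the (log-)determinant — which I would argue is, after substitution, a concave function, or at least has its maximum on the boundary — forces the optimal $(m_2,m_4)$ onto this chord. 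A point on the chord between the outmost orbit value and the central orbit value is exactly realized by the three-orbit (or four-orbit, $K$ odd) design in the statement, with $\bar w_L^*$ the mixing weight. Thus I would reduce to a genuinely one-dimensional optimization: maximize $\det(\mathbf{M}(\bar\xi^*))$ as a function of $\bar w_L^*\in[0,\bar w_{\max}]$, where $\bar w_{\max}$ is the weight at which $(m_2,m_4)=(0,0)$ would be reached. The constraint $L>B_K$ is precisely what guarantees that $(0,0)$ is \emph{not} attainable (that is the content of Lemma~\ref{lem:LequalB} and the definition of $B_K$), so the unconstrained interior maximizer $\bar w_L^*$ is the relevant one and the resulting information matrix differs from $\mathbf{I}_p$.

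Having identified the candidate $\bar\xi^*$, the final step is to confirm global $D$-optimality over the full restricted region by checking \eqref{eq:equivalencetheorem}: I would compute the sensitivity function $\psi(\bm x)=\bm f(\bm x)^\top\mathbf{M}(\bar\xi^*)^{-1}\bm f(\bm x)$ using the explicit inverse \eqref{eq:inverseofinfo}. Since $\mathbf{M}(\bar\xi^*)^{-1}$ has chess-board structure and $\bm f(\bm x)$ splits into the constant, the main-effects part $\bm x$, and the interactions part $\tilde{\bm x}$, the sensitivity function depends on $\bm x$ only through $s=\sum_k x_k$ (equivalently through $d(\bm x)$); one obtains $\psi$ as an explicit even polynomial of degree four in $s$. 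One then shows $\psi(\bm x)\le p$ for all $d(\bm x)$ with $L\le d(\bm x)\le K-L$, with equality at the support points of $\bar\xi^*$ (namely $d=L$, $d=K-L$, and $d=K/2$ or $(K\pm1)/2$). Because $\psi$ is a quartic in $s$ with positive leading coefficient and the optimality of $\bar w_L^*$ in the reduced problem pins down its values and derivatives at the support, a short convexity/interlacing argument bounds it on the intermediate orbits.

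The main obstacle I anticipate is the final inequality $\psi(\bm x)\le p$ on the non-support orbits: one has a quartic in $s$ that equals $p$ at the two (for $K$ even) or the appropriately weighted interior points, and one must rule out an excursion above $p$ in between. The clean way is to use the stationarity of $\bar w_L^*$: the derivative of $\log\det\mathbf{M}$ with respect to the mixing weight vanishing translates, via the general identity relating directional derivatives of $\log\det$ to $\psi$, into the statement that the $\xi^*$-average of $\psi$ over the outmost and central orbits equals $p$; combined with $\psi(\bm x)=p$ being forced at those orbits by the structure (each is a single orbit, so the average over it is $\psi$ evaluated there) and with the sign pattern of the quartic's coefficients, this yields the bound on all intermediate orbits. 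Turning this into a rigorous argument — verifying that the quartic $p-\psi(s)$ is nonnegative on the relevant discrete set of $s$-values, or more strongly on the whole interval $[-(K-2L),K-2L]$ — is the part that requires genuine care rather than routine computation, and it is where I would spend the bulk of the appendix proof.
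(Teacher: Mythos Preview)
Your outline has a genuine gap at the step where you assert that the sensitivity quartic in $s$ has positive leading coefficient. This is the crux of the paper's argument (Lemma~\ref{lem:leadingterm}), and it is not routine: the coefficient $a_4=-\delta_{\mathbf{J}}/(4(1-2m_2+m_4))$ depends on the optimal weight $\bar w_L^*$, which is only implicitly defined as a root of a cubic (or quintic) and cannot simply be plugged in. The paper proves $a_4>0$ by contradiction. If $a_4\le0$, the shape of $\tilde\psi$ forces the $D$-optimal symmetric invariant design either onto a single symmetric orbit (hence singular, by Lemma~\ref{lem:regularity}) or to be optimal on the unrestricted cube, i.e.\ to satisfy $m_2=m_4=0$. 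The latter is then excluded by reading the two moment equations as the mean and variance of the random variable $(2k-K)^2$ under $\bar\xi^*$ and applying the sharp upper bound on the variance of a bounded variable due to \citet{Muilwijk:1966a} \citep[see also][]{BhatiaDavis:2000}: for $L>B_K$ this bound shows the variance cannot reach the required value $2K(K-1)$. This variance-bound step is the non-trivial idea in the proof and is entirely absent from your proposal.

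A related weakness is your moment-space reduction. The claim that the optimal $(m_2,m_4)$ lies on the chord joining the outmost and central orbit points is only hand-waved (``concave, or at least has its maximum on the boundary''). The determinant displayed in Section~\ref{sec:symmetric-case} is a product of several factors and is not log-concave in $(m_2,m_4)$; knowing that $(0,0)\notin\mathcal M$ places the maximizer on $\partial\mathcal M$, but $\partial\mathcal M$ also contains the short chords between consecutive arc points, and nothing you wrote rules those out. In fact, once Lemma~\ref{lem:leadingterm} is available, the paper's route is shorter than yours: a symmetric quartic with positive leading term can meet the level $p$ on at most two symmetric orbits, so the equivalence theorem together with Lemma~\ref{lem:regularity} directly pins the support to the outmost and the central symmetric orbits, and only the one-dimensional weight optimization remains. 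The moment-geometry detour is then unnecessary.
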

From the proof given in the appendix it can be seen that the optimal design
specified in Theorem~\ref{thm:innerOrbits} is unique within the class
of symmetric invariant designs.

The weights $\bar{w}_{L}^*$ can be expressed as roots of polynomials
of degree three for $K$ even or five for $K$ odd, which arise from
setting the derivative of the criterion function
$\log\det(\mathbf{M}(\bar{\xi}))$ with respect to the weight equal to
zero. In general these roots are not rational numbers, and the optimal
designs from Theorem~\ref{thm:innerOrbits} cannot be directly realized
as exact designs.  However, they may well serve as benchmarks for
realistic candidates.

For the introductory example of $K=6$ rules, in which $L=2$ up to
$K-L=4$ rules can be active, the symmetric invariant optimal design is
supported by all possible orbits $\mathcal{O}_2$, $\mathcal{O}_3$ and
$\mathcal{O}_4$ of $\mathcal{X}_{2,4}$ with weights
$\bar{w}_2^*=\bar{w}_4^*=(45- 6\cdot\sqrt{37})/22\approx 0.3865$ and
$\bar{w}_3^*=1-2\bar{w}_2^*\approx 0.2270$.  The efficiency of this
design with respect to the full factorial design is
$\det(\mathbf{M}(\bar{\xi}^*))^{(1/p)}\approx 0.8854$.  The weights
for individual design points are $\bar{\xi}^*(\bm{x})\approx 0.0258$
for $\bm{x}\in\mathcal{O}_{2} \cup \mathcal{O}_{4}$ and
$\bar{\xi}^*(\bm{x})\approx 0.0113$ for $\bm{x}\in\mathcal{O}_{3}$.

Numerical values for optimal weights $\bar{w}_L^*$ are given in
Table~\ref{tab:optInnerOrbitSym}.  Note that also there the index $c$
stands for a central orbit.  The values in the tables were computed in
R~\citep{R:2018}.  For Table~\ref{thm:DoptEffAsFactorial} the weights
of the designs given in the proof of
Theorem~\ref{thm:DoptEffAsFactorial} were implemented. In the other
case, for Table~\ref{tab:optInnerOrbitSym}, the weights are roots of
polynomials, as was mentioned above. These were computed using the
polynom package~\citep*{VenablesHornikMaechler:2016}.  Optimality of
the resulting designs was checked using the equivalence theorem. In
all cases condition~\eqref{eq:equivalencetheorem} holds numerically
with a maximum error for $\psi(\bm{x})-p$ of order $10^{-12}$ or
smaller.

For the sake of clarity the results in the tables were rounded to four
digits.  This concerns especially the values in
Table~\ref{tab:optInnerOrbitSym}. Comparing the $D$-efficiency of the
rounded with the original values shows, that the loss is of order
$10^{-7}$ and hence negligible.

\section{Discussion}
\label{sec:discussion}
In the present paper we developed initial characterizations for
$D$-optimal designs in $K$-factorial models with binary predictors,
when the number of active factors is symmetrically bounded from below
and from above.

For mild to moderate constraints the obtained results have the same
information matrix as the full factorial design and have, hence
$100$\,\% efficiency.  This carries over also to other optimality
criteria based on the eigenvalues of the information matrix like $A$-
and $E$-optimality.  Conditions under which a fully efficient design
exists, i.e.\ a design with the information matrix $\mathbf{I}_{p}$,
are discussed by \cite{Harman:2008} in the context of Schur
optimality.

For strong constraints the restriction is so severe that the obtained
optimal designs do no longer have the same information matrix as the
full factorial. However, the resulting efficiencies, which are listed
in Table~\ref{tab:optInnerOrbitSym}, are still rather high. Also in
this situation the results can be extended to other optimality
criteria, but different weights have to be determined.

Further research is needed for dealing with asymmetric constraints, in
particular, in the case of narrow bounds or when central orbits are
excluded.  If the bounds are wide enough such that the lower bound is
below the threshold $B_K$ and the upper bound is above $K-B_K$, then
designs characterized in Theorem~\ref{thm:DoptEffAsFactorial} can be
used, where the bound is chosen as $\max\{L,K-U\}$ by majorization.

\section*{Acknowledgment}
This work was partly supported under DFG grant SCHW
531/15-4, while the first author was affiliated with the
University of Magdeburg.

\section*{Appendix A: Proofs}
For the proof of Lemma~\ref{lem:regularity}
we need some auxiliary results on the eigenvalues of
$\mathbf{M}_{22}-m_2^2\mathbf{J}_{C(K,2)}$
for a symmetric invariant design when $K\geq 4$.

\begin{lemma}
  \label{lem:eigenvalue3}
  Let 
  \begin{equation*}
  \lambda_{\bm{1}}= 
  1+2(K-2)m_{2} +\textstyle{\frac{1}{2}}(K-2)(K-3)m_{4} - \textstyle{\frac{1}{2}}K(K-1)m_{2}^2 
  \end{equation*}
  be the eigenvalue of $\mathbf{M}_{22}-m_2^2\mathbf{J}_{C(K,2)}$
  associated with the eigenvector $\bm{1}_{C(K,2)}$.
  Then $\lambda_{\bm{1}}>0$
  if and only if the support of $\bar{\xi}$ includes at least two
  distinct symmetric orbits.
\end{lemma}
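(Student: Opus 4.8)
The plan is to express $\lambda_{\bm 1}$ as an explicit quadratic functional of the weights $\bar w_k$ of the symmetric invariant design $\bar\xi$, and then to show that this functional is strictly positive precisely when at least two distinct symmetric orbits carry positive weight. Recall that $\lambda_{\bm 1}$ is the eigenvalue of $\mathbf{M}_{22}$ on $\bm 1_{C(K,2)}$ minus $m_2^2 C(K,2)$, and that $\mathbf{M}_{22}$ — and hence this eigenvalue — is linear in $\bar\xi$ through $m_2$ and $m_4$. So I would first write $\lambda_{\bm 1}=\sum_k \bar w_k\,a_k - \tfrac12 K(K-1)\bigl(\sum_k \bar w_k\, b_k\bigr)^2$, where $a_k = 1 + 2(K-2)m_2(\bar\xi_k) + \tfrac12(K-2)(K-3)m_4(\bar\xi_k)$ is the corresponding eigenvalue of $\mathbf{M}_{22}(\bar\xi_k)$ on $\bm 1$, and $b_k = m_1(\bar\xi_k) = (2k-K)/K$ so that $m_2(\bar\xi)$ need not appear directly — actually it is cleaner to note $m_2(\bar\xi)=\sum_k\bar w_k m_2(\bar\xi_k)$ and keep $m_2$ as the relevant quantity. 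The key structural observation is that the single-orbit eigenvalue $a_k$ is itself exactly $\tfrac12 K(K-1)\,m_1(\bar\xi_k)^2 = \tfrac12 K(K-1)\,m_2(\bar\xi_k) + \tfrac12 K$ — this should follow by substituting the explicit polynomial expressions \eqref{eq:def:mbar2k} and \eqref{eq:def:mbar4k} for $m_2(\bar\xi_k)$ and $m_4(\bar\xi_k)$ into the formula for $a_k$ and simplifying; equivalently it reflects that for a single orbit $\bm f(\bm x)^\top\bm 1$-type quantities collapse. Granting this, $\lambda_{\bm 1} = \tfrac12 K(K-1)\bigl(\sum_k \bar w_k\, t_k^2 - (\sum_k \bar w_k\, t_k)^2\bigr)$ where $t_k := m_1(\bar\xi_k)=(2k-K)/K$, i.e. $\lambda_{\bm 1}$ is (a positive multiple of) the variance of the random variable taking value $t_k$ with probability $\bar w_k$.

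From there the conclusion is immediate by Jensen / the non-negativity of variance: this variance is $\ge 0$ always, with equality if and only if $t_k$ is constant on the support of $(\bar w_k)$. Since $k\mapsto t_k=(2k-K)/K$ is injective, constancy means the support is a single index $k$, i.e. a single orbit $\mathcal O_k$; but a \emph{symmetric} invariant design supported on a single orbit $\mathcal O_k$ with $k\neq K-k$ would force $\bar w_k=\bar w_{K-k}>0$, a contradiction unless $k=K/2$ — so the degenerate case is exactly $\bar\xi=\bar\xi_{K/2}$ (one symmetric orbit), and also the trivial case where all weight sits on one symmetric orbit $\tilde{\mathcal O}_k$, whose two constituent orbits give $t_k$ and $-t_k$ and hence positive variance as soon as $k\neq K/2$. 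Wait — that last point needs care: a symmetric orbit $\tilde{\mathcal O}_k$ with $k<K/2$ already has $t$ taking the two values $\pm t_k\neq 0$, so it has positive variance; thus the genuine equality case is \emph{only} $\tilde{\mathcal O}_{K/2}$. This matches the statement: $\lambda_{\bm 1}>0$ iff the support includes at least two distinct symmetric orbits (for $K$ even the lone symmetric orbit $\tilde{\mathcal O}_{K/2}$ is the sole exception, and for $K$ odd no single symmetric orbit suffices, consistent with the claim).

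The main obstacle I anticipate is verifying the identity $a_k = \tfrac12 K(K-1)\,m_2(\bar\xi_k)+\tfrac12 K$, i.e.\ that $1 + 2(K-2)m_2(\bar\xi_k) + \tfrac12(K-2)(K-3)m_4(\bar\xi_k)$ equals $\tfrac12 K(K-1)m_2(\bar\xi_k) + \tfrac12 K$. This is a polynomial identity in $k$ once \eqref{eq:def:mbar2k} and \eqref{eq:def:mbar4k} are plugged in, and it can be checked by clearing denominators and comparing the degree-$4$ polynomials in $(2k-K)$ on both sides; alternatively one can argue conceptually that $\bm f(\bm x)^\top\mathbf e$ for the normalized eigenvector $\mathbf e = \bm 1_{C(K,2)}/\sqrt{C(K,2)}$ equals $\binom{d(\bm x)}{2}-\ldots$ a fixed function of $d(\bm x)$ only, so that $a_k = \mathbf e^\top\mathbf M_{22}(\bar\xi_k)\mathbf e$ is literally the value of that squared function at $d=k$, which is a quadratic in $m_1(\bar\xi_k)$ by symmetry of the orbit. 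Once that identity is in hand, the variance interpretation and the case analysis are routine; I would present the variance formula as the crux and relegate the polynomial check to a one-line computation.
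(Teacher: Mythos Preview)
Your variance idea is exactly the mechanism behind the paper's proof, but you have the wrong random variable, and this is what causes the contradiction you stumble into at the end. The correct single-orbit identity is
\[
a_k \;=\; 1+2(K-2)m_2(\bar\xi_k)+\tfrac12(K-2)(K-3)m_4(\bar\xi_k)
\;=\; \tfrac12 K(K-1)\,m_2(\bar\xi_k)^{2},
\]
not $\tfrac12 K(K-1)\,m_1(\bar\xi_k)^{2}$. (Check $k=1$: one computes $m_2(\bar\xi_1)=(K-4)/K$, $m_4(\bar\xi_1)=(K-8)/K$, and then $a_1=(K-1)(K-4)^2/(2K)$, which matches $\tfrac12K(K-1)m_2(\bar\xi_1)^2$ but not $\tfrac12K(K-1)m_1(\bar\xi_1)^2=(K-1)(K-2)^2/(2K)$.) Equivalently, the paper's first step is precisely the statement $\lambda_{\bm 1}(\bar\xi_k)=0$, i.e.\ $a_k=\tfrac12K(K-1)m_2(\bar\xi_k)^2$. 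Since the quadratic term in $\lambda_{\bm 1}$ is $-\tfrac12K(K-1)m_2(\bar\xi)^2=-\tfrac12K(K-1)\bigl(\sum_k\bar w_k\,m_2(\bar\xi_k)\bigr)^2$, the two pieces combine to
\[
\lambda_{\bm 1}\;=\;\tfrac12 K(K-1)\,\mathrm{Var}_{\bar w}\!\bigl(m_2(\bar\xi_k)\bigr),
\]
the variance of $k\mapsto m_2(\bar\xi_k)$ under the weights $\bar w_k$. This is exactly the strict-convexity step in the paper's argument.

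The reason your version with $t_k=m_1(\bar\xi_k)$ cannot be right is visible in the symmetry: $m_2(\bar\xi_k)=m_2(\bar\xi_{K-k})$, so on a single symmetric orbit $\tilde{\mathcal O}_k$ the variable $m_2(\bar\xi_\cdot)$ is constant and the variance is zero, giving $\lambda_{\bm 1}(\tilde\xi_k)=0$ as the lemma requires. With $t_k=m_1(\bar\xi_k)=(2k-K)/K$ you get $t_k=-t_{K-k}$, hence positive ``variance'' on any non-central symmetric orbit, which flatly contradicts the lemma (and is the inconsistency you flagged but then talked yourself out of). Once you replace $t_k$ by $m_2(\bar\xi_k)$, the case analysis is clean: $m_2(\bar\xi_k)$ is strictly decreasing on $\{0,1,\ldots,\lfloor K/2\rfloor\}$, so distinct symmetric orbits give distinct values and hence strictly positive variance; a single symmetric orbit gives a constant and hence variance zero.
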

\begin{proof}
  First note that for each invariant design $\bar{\xi}_k$ on a single
  orbit $\mathcal{O}_k$ we get by inserting the moments that the
  corresponding eigenvalue $\lambda_{\bm{1}}(\bar{\xi}_k)$ is zero.
  Let $\tilde{\xi}_{k}$ be the corresponding symmetric invariant
  design on the symmetric orbit $\tilde{\mathcal{O}}_{k}$. Then
  $m_{2}(\tilde{\xi}_k) = m_{2}(\bar{\xi}_{k})$,
  $m_{4}(\tilde{\xi}_k) = m_{4}(\bar{\xi}_{k})$ and, hence,
  $\lambda_{\bm{1}}(\tilde{\xi}_k)=\lambda_{\bm{1}}(\bar{\xi}_k)=0$.
  Consequently at least two distinct symmetric orbits are needed for
  $\lambda_{\bm{1}}>0$.

  Now, let
  $\bar{\xi}=\bar{w}_k\tilde{\xi}_k+\bar{w}_{\ell}\tilde{\xi}_{\ell}$
  be a symmetric invariant design on the symmetric orbits
  $\tilde{\mathcal{O}}_{k}$ and $\tilde{\mathcal{O}}_{\ell}$,
  $k<\ell\leq K/2$, $\bar{w}_k,\bar{w}_{\ell}>0$.  As
  $m_2(\bar{\xi}_k)$ is strictly decreasing in $k$ we have
  $m_2(\bar{\xi}_k)\neq m_2(\bar{\xi}_{\ell})$.  Thus
  $m_2(\bar{\xi})^2<\bar{w}_k m_2(\bar{\xi}_k)^2 + \bar{w}_{\ell}
  m_2(\bar{\xi}_{\ell})^2$ by the strict concavity of the quadratic
  function, which implies $\lambda_{\bm{1}}>0$.
\end{proof}

\begin{lemma}
  \label{lem:eigenvalue2}
  Let $\lambda_{\mathbf{S}}=1+(K-4)m_{2} - (K-3) m_{4}$
    be the eigenvalue of $\mathbf{M}_{22}-m_2^2\mathbf{J}_{C(K,2)}$
  associated with the remaining eigenvectors of 
  $\mathbf{S}_{K}\mathbf{S}_{K}^{\top}$
  orthogonal to $\bm{1}_{C(K,2)}$.
  Then $\lambda_{\mathbf{S}}>0$
  if and only if the support of $\bar{\xi}$ includes at least one
  symmetric orbit $\tilde{\mathcal{O}}_{k}$ 
  for which $0<k<K/2$.
\end{lemma}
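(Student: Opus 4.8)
The plan is to express $\lambda_{\mathbf{S}}=1+(K-4)m_2-(K-3)m_4$ as a linear functional of the orbit weights and analyze the sign of its value on each single symmetric orbit. First I would substitute the explicit orbit moments \eqref{eq:def:mbar2k} and \eqref{eq:def:mbar4k} into $\lambda_{\mathbf{S}}(\bar\xi_k)$ and simplify. Writing $t=2k-K$ (so $t$ is an even or odd integer ranging symmetrically about $0$), the numerators of $m_2$ and $m_4$ become polynomials in $t^2$; after clearing the common denominator $K(K-1)(K-2)(K-3)$ one obtains a polynomial in $t$ whose factorization should reveal that $\lambda_{\mathbf{S}}(\bar\xi_k)$ vanishes exactly when $t^2=K^2$, i.e.\ when $k=0$ or $k=K$, and is strictly positive for all intermediate $k$. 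The key algebraic step is to verify this factorization: I expect $\lambda_{\mathbf{S}}(\bar\xi_k)$ to be proportional to $(t^2-K^2)$ times a factor that is strictly negative on the relevant range of $t$, or equivalently that $\lambda_{\mathbf{S}}(\bar\xi_k)$ is a downward-opening quadratic in $t^2$ with roots at $t^2=K^2$ and below $t^2=0$.

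Next, since $\lambda_{\mathbf{S}}$ is a \emph{linear} function of $m_2$ and $m_4$, and these in turn are linear in the orbit weights $\bar w_k$, the value $\lambda_{\mathbf{S}}(\bar\xi)$ for a general symmetric invariant design $\bar\xi=\sum_k\bar w_k\tilde\xi_k$ is the corresponding convex combination $\sum_k\bar w_k\,\lambda_{\mathbf{S}}(\tilde\xi_k)$, and by the symmetry of the even moments $\lambda_{\mathbf{S}}(\tilde\xi_k)=\lambda_{\mathbf{S}}(\bar\xi_k)$. Using the computation above, $\lambda_{\mathbf{S}}(\bar\xi_k)=0$ for the degenerate outer orbits $\tilde{\mathcal O}_0$ (containing only $\bm x=\pm\bm1_K$) and $\lambda_{\mathbf{S}}(\bar\xi_k)>0$ for every $k$ with $0<k<K/2$; the case $k=K/2$ (when $K$ is even) should also give a strictly positive value, which I would record separately. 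It then follows immediately that $\lambda_{\mathbf{S}}(\bar\xi)>0$ precisely when at least one weight $\bar w_k$ with $0<k<K/2$ is positive, and $\lambda_{\mathbf{S}}(\bar\xi)=0$ otherwise (i.e.\ when $\bar\xi$ is concentrated on $\tilde{\mathcal O}_0$ alone, which in fact forces $\bar\xi=\bar\xi_0$-type). This is exactly the claimed equivalence.

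The main obstacle is purely the bookkeeping in the first step: confirming that the combination $1+(K-4)m_2(\bar\xi_k)-(K-3)m_4(\bar\xi_k)$ collapses to the clean factored form claimed, including checking the edge cases $K=4$ and $K=5$ where some coefficients degenerate, and handling the proviso $j\le K$ under which the moment formulas hold (for $K=4$ one has $m_4(\bar\xi_k)$ with the degree-$4$ moment still valid, but the multiplicities $K(K-3)/2$ and $K-1$ collapse). I would also double-check the degenerate central orbit $k=K/2$ for even $K$, where $\tilde{\mathcal O}_{K/2}=\mathcal O_{K/2}$, to be sure the strict positivity persists. Once the single-orbit evaluation is pinned down, the passage to general $\bar\xi$ is a one-line convexity (in fact linearity) argument, exactly as in the proof of Lemma~\ref{lem:eigenvalue3}.
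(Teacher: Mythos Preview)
Your overall strategy---evaluate $\lambda_{\mathbf{S}}$ on single orbits via the explicit moment formulas, then pass to general $\bar\xi$ by linearity---matches the paper's. However, your anticipated outcome of the single-orbit computation is wrong, and this breaks the argument.

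Carrying out the substitution with $t=2k-K$ actually gives
\[
\lambda_{\mathbf{S}}(\bar\xi_k)=\frac{t^2(K^2-t^2)}{K(K-1)(K-2)},
\]
so as a quadratic in $t^2$ the roots sit at $t^2=0$ \emph{and} $t^2=K^2$, not at $t^2=K^2$ and some value below zero. In particular $\lambda_{\mathbf{S}}(\bar\xi_{K/2})=0$ for even $K$, contrary to your claim that the central orbit yields a strictly positive value. This is precisely why the lemma requires $0<k<K/2$ strictly: a design supported only on $\tilde{\mathcal O}_0$ and $\tilde{\mathcal O}_{K/2}$ (or on $\tilde{\mathcal O}_{K/2}$ alone) still has $\lambda_{\mathbf{S}}=0$. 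Your final paragraph already states the correct condition $0<k<K/2$, but the reasoning you give for it (``concentrated on $\tilde{\mathcal O}_0$ alone'') is inconsistent with your earlier assertion that $k=K/2$ contributes positively; under your stated computation the equivalence would instead read $0<k\le K/2$, which is false. Once you correct the factorization, the linearity argument goes through exactly as you describe.
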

\begin{proof}
  By inserting the moments we get for this eigenvalue
  \begin{equation*}
    \lambda_{\mathbf{S}}(\bar{\xi}_{k})
    = \frac{(2k-K)^2(K^2 - (2k-K)^2)}{K(K-1)(K-2)}\,,
  \end{equation*}
  which is equal to $0$ for $k=0$ or $k=K/2$.  Moreover,
  $\lambda_{\mathbf{S}}(\bar{\xi}_k)$ is a polynomial in $k$ of degree
  four, symmetric around $K/2$, and with negative leading term.  Hence,
  there cannot be any other root, and
  $\lambda_{\mathbf{S}}(\bar{\xi}_k)>0$ for all $0<k<K/2$.
\end{proof}

\begin{lemma}
  \label{lem:eigenvalue1}
  Let $\lambda_{\mathbf{I}} = 1-2m_{2}+m_{4}$ be the eigenvalue of
  $\mathbf{M}_{22}-m_2^2\mathbf{J}_{C(K,2)}$ associated with the
  remaining eigenvectors orthogonal to those of
  $\mathbf{S}_{K}\mathbf{S}_{K}^{\top}$.  Then
  $\lambda_{\mathbf{I}}>0$ if and only if the support of $\bar{\xi}$
  includes at least one symmetric orbit $\tilde{\mathcal{O}}_{k}$ for
  which $k>1$.
\end{lemma}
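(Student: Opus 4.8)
The strategy mirrors the proofs of Lemmas~\ref{lem:eigenvalue3} and~\ref{lem:eigenvalue2}: first evaluate the eigenvalue $\lambda_{\mathbf{I}}=1-2m_2+m_4$ on the single-orbit designs $\bar\xi_k$, show it is nonnegative there and identify exactly the orbits on which it vanishes; then use convexity/affine-combination arguments to promote this to the general invariant symmetric design. First I would substitute the explicit formulas \eqref{eq:def:mbar2k} and \eqref{eq:def:mbar4k} into $1-2m_2+m_4$. Writing $z=(2k-K)^2$, this becomes a quadratic in $z$ divided by $K(K-1)(K-2)(K-3)$. I expect the numerator to factor as $(z-\text{something})^2$ or as a product of two linear factors in $z$ with a transparent sign; more concretely, since $\lambda_{\mathbf{I}}(\bar\xi_k)$ should be a degree-four polynomial in $k$ symmetric about $K/2$ that vanishes precisely when $d(\bm{x})\in\{0,1,K-1,K\}$ is forced, I anticipate it equals $c\cdot z(z-1)$ (up to the positive constant) — i.e. $\lambda_{\mathbf{I}}(\bar\xi_k)=\dfrac{(2k-K)^2\big((2k-K)^2-1\big)}{K(K-1)(K-2)(K-3)}$ or something of that shape. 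The roots in $z$ are then $z=0$ (only possible for $K$ even, $k=K/2$) and $z=1$ (i.e. $2k-K=\pm1$, so $K$ odd and $k=(K\pm1)/2$), and for all other $k$ the value is strictly positive because the leading coefficient in $k$ is positive. This confirms that $\lambda_{\mathbf{I}}(\bar\xi_k)=0$ exactly when $k\in\{0,1,K-1,K\}$ fails to hold, i.e.\ precisely on the central orbit(s), and $\lambda_{\mathbf{I}}(\bar\xi_k)>0$ as soon as some orbit with $k>1$ (equivalently $1<k<K/2$, using symmetry) is present.

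Next I would pass from single orbits to symmetric orbits: since $m_2(\tilde\xi_k)=m_2(\bar\xi_k)$ and $m_4(\tilde\xi_k)=m_4(\bar\xi_k)$ (even moments are unchanged under the sign-change symmetry, as already noted in the proof of Lemma~\ref{lem:eigenvalue3}), we get $\lambda_{\mathbf{I}}(\tilde\xi_k)=\lambda_{\mathbf{I}}(\bar\xi_k)$, which is zero for the central orbit and strictly positive for any $\tilde{\mathcal{O}}_k$ with $1<k<K/2$. For a general symmetric invariant design $\bar\xi=\sum_k\bar w_k\tilde\xi_k$, the moments $m_2(\bar\xi)$ and $m_4(\bar\xi)$ are the corresponding convex combinations, hence $\lambda_{\mathbf{I}}(\bar\xi)=\sum_k\bar w_k\lambda_{\mathbf{I}}(\tilde\xi_k)$ because $\lambda_{\mathbf{I}}=1-2m_2+m_4$ is \emph{affine} (indeed linear) in $(m_2,m_4)$. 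Thus $\lambda_{\mathbf{I}}(\bar\xi)\ge 0$ always, with equality if and only if $\bar w_k=0$ for every $k$ with $\lambda_{\mathbf{I}}(\tilde\xi_k)>0$, i.e.\ if and only if the support of $\bar\xi$ contains no symmetric orbit $\tilde{\mathcal{O}}_k$ with $1<k<K/2$. Using the sign-change symmetry to restrict attention to $k\le K/2$, the condition ``$k>1$'' in the statement is exactly ``$1<k<K/2$'' together with the (excluded) central case; so $\lambda_{\mathbf{I}}(\bar\xi)>0$ iff the support includes at least one $\tilde{\mathcal{O}}_k$ with $k>1$.

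The only genuine obstacle is the first step — carrying out the substitution $1-2m_2+m_4$ cleanly and confirming the claimed factorization of the numerator, so that the roots in $k$ are exactly the forced-extreme cases and nothing else. The affine-combination argument that lifts the single-orbit result to arbitrary symmetric invariant designs is routine (and, unlike Lemma~\ref{lem:eigenvalue3}, requires no strict-concavity input, since here the eigenvalue is linear rather than quadratic in the moments). One small bookkeeping point to state carefully: the hypothesis $K\ge4$ (assumed throughout this block of auxiliary lemmas) guarantees the denominator $K(K-1)(K-2)(K-3)$ is positive and the formula \eqref{eq:def:mbar4k} for $m_4$ is valid, so no degenerate sub-cases arise.
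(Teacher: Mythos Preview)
Your overall strategy matches the paper's: compute $\lambda_{\mathbf{I}}(\bar\xi_k)$ on single orbits, locate its zeros, and then use that $\lambda_{\mathbf{I}}=1-2m_2+m_4$ is affine in the moments to pass to arbitrary symmetric invariant designs. The affine-combination step is fine and is exactly the (implicit) argument in the paper.

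However, the core computation is wrong, and this reverses your identification of the zero set. Substituting \eqref{eq:def:mbar2k} and \eqref{eq:def:mbar4k} with $z=(2k-K)^2$ one obtains
\[
\lambda_{\mathbf{I}}(\bar\xi_k)
=\frac{z^{2}-2(K^{2}-2K+2)\,z+K^{2}(K-2)^{2}}{K(K-1)(K-2)(K-3)}
=\frac{\bigl(z-K^{2}\bigr)\bigl(z-(K-2)^{2}\bigr)}{K(K-1)(K-2)(K-3)},
\]
so the roots in $k$ are $k\in\{0,1,K-1,K\}$, i.e.\ the \emph{outermost} orbits, not the central ones; your guessed numerator $z(z-1)$ would place the zeros at $k=K/2$ and $k=(K\pm1)/2$, which is exactly backwards. (Your text is internally inconsistent here: you first correctly anticipate vanishing at $d(\bm{x})\in\{0,1,K-1,K\}$, then propose a formula whose zeros lie at the center, and finally conclude ``precisely on the central orbit(s)''.) With the correct factorization, for $1<k\le K/2$ one has $0\le z<(K-2)^2<K^2$, both factors are negative, and the quotient is strictly positive; for $k\in\{0,1\}$ it vanishes. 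This is precisely what the paper records: a degree-four polynomial in $k$, symmetric about $K/2$, with positive leading term and zeros at $k=0,1$, hence strictly positive on $1<k\le K/2$. Your subsequent linearity argument then gives $\lambda_{\mathbf{I}}(\bar\xi)=\sum_k\bar w_k\lambda_{\mathbf{I}}(\tilde\xi_k)\ge 0$, with equality iff all weight sits on $\tilde{\mathcal O}_0$ and $\tilde{\mathcal O}_1$; equivalently $\lambda_{\mathbf{I}}(\bar\xi)>0$ iff some $\tilde{\mathcal O}_k$ with $k>1$ carries positive weight. Note that the central orbit must be \emph{included} among the orbits with $\lambda_{\mathbf{I}}>0$, so your side condition ``$1<k<K/2$'' should read ``$1<k\le K/2$''.
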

\begin{proof}
  By inserting the moments we see that
  $\lambda_{\mathbf{I}}(\bar{\xi}_k)$ is a polynomial in $k$ of degree
  four, symmetric around $K/2$, and with positive leading term, which
  is equal to zero for $k=0$ and $k=1$.  Hence, there cannot be any
  other root, and $\lambda_{\mathbf{I}}(\bar{\xi}_k)>0$ for all
  $1<k\leq K/2$.
\end{proof}

\begin{proof}[Proof of Lemma~\ref{lem:regularity}]
  We note that according to \cite{FreiseHollingSchwabe:2018prea} the
  matrix $\mathbf{M}_{11}$ associated with the main effects is
  nonsingular when at least two distinct symmetric orbits are involved
  in the design $\bar{\xi}$.  Hence, the requirement of
  $\mathbf{M}_{11}$ to be nonsingular does not impose any additional
  condition besides those of Lemmas~\ref{lem:eigenvalue3}
  to~\ref{lem:eigenvalue1}.  Hence, the assertion follows from these
  lemmas.
\end{proof}

\begin{proof}[Proof of Theorem~\ref{thm:DoptEffAsFactorial}]
  For $K=2$ and $K=3$ we have $B_K<1$, such that the full factorial
  design can serve as the asserted symmetric invariant design.
 
  Let $K\geq 4$.  If $L=B_{K}$, then the design of
  Lemma~\ref{lem:LequalB} can be used.  Let $L<B_{K}$ and let $K$ be
  even. Then choose $\ell$ such that
  \begin{equation*}
   B_{K} = \frac{K-\sqrt{3K-2}}{2} \leq \ell \leq
  \frac{K-\sqrt{K}}{2}\,. 
  \end{equation*}
  Such $\ell$ always exists (for $ K\leq 8$ see
  Table~\ref{tab:optOuterOrbitSym};  note that
  $\sqrt{3K-2}-\sqrt{K}\geq 2$ for $K\geq 10$).

  Let designs $\bar{\xi}_{(L)}$ and $\bar{\xi}_{(\ell)}$ be defined as
  \begin{equation}
    \label{eq:2}
    \bar{\xi}_{(L)}=\bar{w}_L\bar{\xi}_L+(1-2\bar{w}_L)\bar{\xi}_{K/2}
    + \bar{w}_L\bar{\xi}_{K-L}
    \quad\text{with}\quad
    \bar{w}_{L} = \frac{K}{2(2L-K)^2}
  \end{equation}
  and
  \begin{equation}
    \label{eq:3}
    \bar{\xi}_{(\ell)}=\bar{w}_{\ell}\bar{\xi}_{\ell}+(1-2\bar{w}_{\ell})\bar{\xi}_{K/2}
    + \bar{w}_{\ell}\bar{\xi}_{K-\ell}
    \quad\text{with}\quad
    \bar{w}_{\ell} = \frac{K}{2(2\ell-K)^2}\,.
  \end{equation}
  According to \cite{FreiseHollingSchwabe:2018prea} the moments
  $m_{2}(\bar{\xi}_{(L)})$ and $m_{2}(\bar{\xi}_{(\ell)})$ are equal
  to $0$, because $L,\ell<(K-\sqrt{K})/2$.  Next we consider the
  convex combination
  \begin{equation*}
    \bar{\xi}^{*} = \alpha \bar{\xi}_{(L)} + (1-\alpha)\bar{\xi}_{(\ell)}
    \quad\text{with}\quad
    \alpha= \frac{3K-2-(2\ell-K)^2}{4(\ell-L)(K - L - \ell)}\,.
  \end{equation*}
  Also for the symmetric invariant design $\bar{\xi}^*$ we have
  $m_{2}(\bar{\xi}^*) = 0$. Further we obtain
  \begin{equation*}
    m_{4}(\bar{\xi}^*)
    =  \frac{4 \alpha(\ell-L)(K - L - \ell)+ (2\ell-K)^2 -
      (3K-2)}{(K-1)(K-2)(K-3)}
    = 0\,,
  \end{equation*}
  which establishes the result for even $K$.
  
  For $K$ odd the proof is similar.  For $L<B_{K}$ choose $\ell$ as
  above with the corresponding value for $B_{K}$ and the designs
  \begin{equation*}
    \bar{\xi}_{(L)}=\bar{w}_L\bar{\xi}_L+(\textstyle{\frac{1}{2}}-\bar{w}_L)\bar{\xi}_{(K-1)/2}
    +(\textstyle{\frac{1}{2}}-\bar{w}_L)\bar{\xi}_{(K+1)/2}+ \bar{w}_L\bar{\xi}_{K-L}
  \end{equation*}
  and
  \begin{equation*}
    \bar{\xi}_{(\ell)}=\bar{w}_{\ell}\bar{\xi}_{\ell}+(\textstyle{\frac{1}{2}}-\bar{w}_{\ell})\bar{\xi}_{(K-1)/2}
    +(\textstyle{\frac{1}{2}}-\bar{w}_{\ell})\bar{\xi}_{(K+1)/2}+ \bar{w}_{\ell}\bar{\xi}_{K-\ell}
  \end{equation*}
  with corresponding weights
  \begin{equation*}
    \bar{w}_{L}
    =\frac{K-1}{2((2L-K)^2-1)}
    \quad\text{and}\quad
    \bar{w}_{\ell}
    =\frac{K-1}{2((2\ell-K)^2-1)}\,.
  \end{equation*}
  Again such $\ell$ always exists (for $K\leq 7$ see
  Table~\ref{tab:optOuterOrbitSym}; for $K \geq 9$ note that
  $\sqrt{3K}-\sqrt{K}\geq 2$) and
  $m_{2}(\bar{\xi}_{(L)})=m_{2}(\bar{\xi}_{(\ell)})=0$.

  Then the convex combination
  \begin{equation*}
    \bar{\xi}^{*} = \alpha \bar{\xi}_{(L)} + (1-\alpha)\bar{\xi}_{(\ell)}
    \quad\text{with}\quad
    \alpha
    = \frac{3K-(2\ell-K)^2}{4(\ell-L)(K-L-\ell)}
  \end{equation*}
  yields $m_{4}(\bar{\xi}^*)=0$, and the result follows.
\end{proof}

For the proof of Theorem~\ref{thm:innerOrbits} we will make use of the
celebrated Kiefer-Wolfowitz equivalence theorem
\citep{KieferWolfowitz:1960}.  Therefore we first investigate the
sensitivity function (functional derivative).

\begin{lemma}
  \label{lem:sensitivitypoly}
  Let $\bar{\xi}$ be a symmetric invariant design. 
  Then the sensitivity function 
  $\psi(\bm{x})=\bm{f}(\bm{x})^{\top}\mathbf{M}(\bar{\xi})^{-1}\bm{f}(\bm{x})$ 
  is constant on the orbits,
  $\psi(\bm{x})=\tilde{\psi}(k)$ for $\bm{x}\in\mathcal{O}_k$, say,
  and the function $\tilde{\psi}$ is a polynomial of degree at most $4$, 
  which is symmetric with respect to $K/2$ $(\tilde{\psi}(K-k)=\tilde{\psi}(k))$.
\end{lemma}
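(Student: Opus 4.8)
The plan is to exploit the chess-board structure of both $\mathbf{M}(\bar{\xi})$ and its inverse, as recorded in~\eqref{eq:inverseofinfo}, together with the combinatorial expressions~\eqref{eq:def:mbar1k}--\eqref{eq:def:mbar4k} for the orbit moments. First I would observe that constancy of $\psi$ on each orbit $\mathcal{O}_k$ is immediate from the equivariance of $\bm{f}$: for a permutation $g$ of the factors we have $\bm{f}(g(\bm{x}))=\mathbf{Q}_g\bm{f}(\bm{x})$, and since $\bar{\xi}$ is permutation-invariant, $\mathbf{M}(\bar{\xi})^{-1}$ commutes with $\mathbf{Q}_g$; hence $\psi(g(\bm{x}))=\psi(\bm{x})$, and any two points of $\mathcal{O}_k$ are related by such a $g$. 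The symmetry $\tilde{\psi}(K-k)=\tilde{\psi}(k)$ follows the same way from the additional sign-change symmetry available in the symmetric case, under which orbit $\mathcal{O}_k$ maps to $\mathcal{O}_{K-k}$ and $\mathbf{M}(\bar{\xi})^{-1}$ is again invariant (since $m_1=m_3=0$).

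The substantive part is to show $\tilde{\psi}$ is a polynomial in $k$ of degree at most $4$. Here I would write $\bm{f}(\bm{x})=(1,\bm{x}^\top,\tilde{\bm{x}}^\top)^\top$ and expand the quadratic form block by block against~\eqref{eq:inverseofinfo}. Abbreviating $s=\sum_k x_k=2k-K$, one gets contributions of the following types: the $(1,1)$-entry $c_0$; the main-effect block $\bm{x}^\top\mathbf{M}_{11}^{-1}\bm{x}$, which because $\mathbf{M}_{11}^{-1}$ is a combination of $\mathbf{I}_K$ and $\mathbf{J}_K$ equals a linear combination of $\bm{x}^\top\bm{x}=K$ and $(\bm{1}_K^\top\bm{x})^2=s^2$; the cross term $-2c_2\,\bm{1}_{C(K,2)}^\top\tilde{\bm{x}}$, where $\bm{1}^\top\tilde{\bm{x}}=\sum_{j<k}x_jx_k=\tfrac12(s^2-K)$; and the interaction block $\tilde{\bm{x}}^\top\mathbf{C}_{22}\tilde{\bm{x}}$. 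For the last, since $\mathbf{C}_{22}$ is a combination of $\mathbf{I}_{C(K,2)}$, $\mathbf{S}_K\mathbf{S}_K^\top$ and $\mathbf{J}_{C(K,2)}$, I need $\tilde{\bm{x}}^\top\tilde{\bm{x}}=C(K,2)$, the already-computed $(\bm{1}^\top\tilde{\bm{x}})^2$, and $\tilde{\bm{x}}^\top\mathbf{S}_K\mathbf{S}_K^\top\tilde{\bm{x}}=\|\mathbf{S}_K^\top\tilde{\bm{x}}\|^2$. The $j$-th entry of $\mathbf{S}_K^\top\tilde{\bm{x}}$ is $\sum_{k\neq j}x_jx_k=x_j(s-x_j)=x_js-1$, so $\|\mathbf{S}_K^\top\tilde{\bm{x}}\|^2=\sum_j(x_js-1)^2=Ks^2-2s\cdot s+K=K(s^2+1)-2s^2$, again a polynomial in $s^2$. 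Collecting everything, $\tilde{\psi}$ is a polynomial in $s^2=(2k-K)^2$ of degree at most $2$, hence a polynomial in $k$ of degree at most $4$, automatically symmetric about $K/2$; this simultaneously re-derives both claimed properties.

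The main obstacle is bookkeeping rather than conceptual: one must carry the coefficients $c_0,c_2,\delta_{\mathbf{S}},\delta_{\mathbf{J}}$ (and $1/(1-m_2)$, $1/(1+(K-1)m_2)$, $1/(1-2m_2+m_4)$) through the expansion and verify that nothing beyond quadratic-in-$s^2$ survives. Since the only vectors ever hit are $\bm{x}$, $\tilde{\bm{x}}$, $\bm{1}_K$, $\bm{1}_{C(K,2)}$, and the only scalars produced are $K$, $s$, $s^2$ (note odd powers of $s$ never appear, consistent with the sign-change symmetry), degree $\le 4$ in $k$ is forced structurally, so no delicate cancellation is required—only care that $\mathbf{M}(\bar{\xi})$ is nonsingular, which is guaranteed by Lemma~\ref{lem:regularity} whenever $\bar{\xi}$ is supported on enough symmetric orbits, the standing assumption when $\psi$ is formed.
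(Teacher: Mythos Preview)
Your proposal is correct and follows essentially the same route as the paper's own proof: expand $\psi(\bm{x})$ against the block structure of $\mathbf{M}(\bar\xi)^{-1}$ in~\eqref{eq:inverseofinfo}, reduce each piece to a function of $s=2k-K$ via the identities $\bm{x}^\top\bm{x}=K$, $\bm{1}^\top\tilde{\bm{x}}=(s^2-K)/2$, $\tilde{\bm{x}}^\top\tilde{\bm{x}}=C(K,2)$ and $\tilde{\bm{x}}^\top\mathbf{S}_K\mathbf{S}_K^\top\tilde{\bm{x}}=(K-2)s^2+K$, and observe that only even powers of $s$ occur. The only addition you make is the preliminary equivariance argument for orbit-constancy and the sign-change argument for the $k\leftrightarrow K-k$ symmetry, which the paper leaves implicit and instead reads off directly from the computed form $\tilde\psi(k)=a_4(2k-K)^4+a_2(2k-K)^2+a_0$.
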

\begin{proof}
  Using the inverse of the information matrix
  in equation~\eqref{eq:inverseofinfo} we obtain for the sensitivity function 
  \begin{equation}
  \label{eq:sensitivity}
  \psi(\bm{x})
  = c_{0} -2c_{2}\bm{\tilde{x}}^\top\bm{1}_{C(K,2)}
  +\bm{x}^\top\mathbf{M}_{11}^{-1}\bm{x}
  +\bm{\tilde{x}}^\top\mathbf{C}_{22}\bm{\tilde{x}}\,.
\end{equation}
Note that $\bm{x}^\top\bm{x} = K$ and $\bm{\tilde{x}}^\top\bm{\tilde{x}} =
K(K-1)/2$. Further, for $\bm{x} \in \mathcal{O}_{k}$, we get
\begin{equation*}
  \bm{x}^\top\bm{1}_{K} = 2 k - K\,,
  \quad
  \bm{\tilde{x}}^\top\bm{1}_{C(K,2)} = ((2k-K)^2 - K)/2
\end{equation*}
and
\begin{equation*}
  \bm{\tilde{x}}^\top\mathbf{S}_{K}\mathbf{S}_{K}^\top\bm{\tilde{x}}
  = (K - 2)(2 k-K)^{2} + K\,.
\end{equation*}
This yields
\begin{equation}
  \label{eq:sensitivityink}
  \psi(\bm{x})
  = a_{4}(2k-K)^4+ a_{2}(2k-K)^{2}+a_{0}
\end{equation}
with coefficients
\begin{align*}
  a_{0}&=c_{0} + \frac{K}{1-m_{2}} +\frac{K(K-1)}{2(1-2m_{2}+m_{4})}
         -\frac{\delta_{\mathbf{S}}K}{1-2m_{2}+m_{4}}
         -\frac{\delta_{\mathbf{J}}K^2}{4(1-2m_{2}+m_{4})}\\
  a_{2}&=-\left(c_{2}
  + \frac{m_{2}}{(1-m_{2})(1+(K-1)m_{2})}
         +\frac{\delta_{\mathbf{S}}(K-2)}{1-2m_{2}+m_{4}}
         -\frac{\delta_{\mathbf{J}}K}{2(1-2m_{2}+m_{4})}\right)\\
  a_{4}&=-\frac{\delta_{\mathbf{J}}}{4(1-2m_{2}+m_{4})}\,.
\end{align*}
Hence, $\tilde\psi$ is a polynomial of degree four in $k$.
The symmetry around $K/2$ follows, since only even powers of
$k$ occur.
\end{proof}
\begin{lemma}
  \label{lem:leadingterm}
  Let $B_{K}<L<K/2$ and $\bar{\xi}^*$ an optimal symmetric invariant
  design on $\mathcal{X}_{L,K-L}$. Then the orbitwise sensitivity
  function $\tilde{\psi}$ has a positive leading term for the fourth
  order monomial $k^4$.
\end{lemma}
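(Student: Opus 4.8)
The plan is to argue by contradiction via the Kiefer--Wolfowitz equivalence theorem. By Lemma~\ref{lem:sensitivitypoly} and~\eqref{eq:sensitivityink} the coefficient of $k^4$ in $\tilde\psi$ equals $16a_4=-4\delta_{\mathbf{J}}/(1-2m_2+m_4)$, and since $1-2m_2+m_4=\lambda_{\mathbf{I}}>0$, the claim is that $a_4>0$, equivalently $\delta_{\mathbf{J}}<0$. By the invariance reduction of Section~\ref{sec:ModelAndInfo} an optimal symmetric invariant design is $D$-optimal on $\mathcal{X}_{L,K-L}$, hence $\mathbf{M}(\bar\xi^*)$ is nonsingular, so~\eqref{eq:equivalencetheorem} gives $\tilde\psi(k)\le p$ for all integers $k$ with $L\le k\le K-L$, while $\int\tilde\psi\,d\bar\xi^*=\operatorname{tr}(\mathbf{M}(\bar\xi^*)^{-1}\mathbf{M}(\bar\xi^*))=p$. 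Consequently $\tilde\psi(k)=p$ at every orbit carrying positive weight, and by Lemma~\ref{lem:regularity} there are at least two such symmetric orbits.

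Assume, for contradiction, $a_4\le0$, and put $g(u):=\tilde\psi-p$, which by Lemma~\ref{lem:sensitivitypoly} is a polynomial of degree at most two in $u=(2k-K)^2$ and is concave since $a_4\le0$; it vanishes at the support values of $u$ and is $\le0$ at every admissible value of $u$. If $g\equiv0$, then $\tilde\psi\equiv p$, so $\bar\xi^*$ is $D$-optimal even on the whole cube $\{-1,+1\}^K$; integrating $\psi\equiv p$ against the full factorial design, whose information matrix is $\mathbf{I}_p$, gives $\operatorname{tr}\mathbf{M}(\bar\xi^*)^{-1}=p$, while $\det\mathbf{M}(\bar\xi^*)=1$, so by the arithmetic--geometric mean inequality $\mathbf{M}(\bar\xi^*)=\mathbf{I}_p$ and $m_2=m_4=0$ --- impossible for $L>B_K$, as noted before Theorem~\ref{thm:innerOrbits}. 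If $g$ is affine but not identically zero, it has a single root, contradicting the presence of at least two distinct support values. Hence $a_4<0$, the support consists of exactly two symmetric orbits with $u$-values $u_1<u_2$, and $g=a_4(u-u_1)(u-u_2)>0$ on $(u_1,u_2)$; by the equivalence theorem no admissible value of $(2k-K)^2$ lies strictly between $u_1$ and $u_2$, and since $(2k-K)^2$ is strictly monotone in $k$ on $\{L,\dots,\lfloor K/2\rfloor\}$ the two support orbits must be consecutive, $\tilde{\mathcal{O}}_a$ and $\tilde{\mathcal{O}}_{a+1}$ with $L\le a<a+1\le K/2$.

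It thus remains to exclude this single residual configuration: $a_4<0$ with $\bar\xi^*=\bar w\,\tilde\xi_a+(1-\bar w)\,\tilde\xi_{a+1}$. On such a support the moments $m_2$ and $m_4$ --- and therefore $\delta_{\mathbf{J}}$ --- are, by~\eqref{eq:def:mbar2k} and~\eqref{eq:def:mbar4k}, explicit rational functions of the single weight $\bar w$, and I would verify directly that $\delta_{\mathbf{J}}<0$ throughout (for $K=4$, where a pair of consecutive orbits is the only admissible symmetric configuration, one computes $\delta_{\mathbf{J}}\equiv-1$). Since $\delta_{\mathbf{J}}<0$ is equivalent to $a_4>0$, this contradicts $a_4<0$ and the lemma follows.

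The only genuinely delicate step is this final sign check on two-consecutive-orbit supports; the rest is standard equivalence-theorem bookkeeping together with Lemma~\ref{lem:regularity} and the impossibility of $\mathbf{M}(\bar\xi^*)=\mathbf{I}_p$ for $L>B_K$. A cleaner route to the residual step would be a variance inequality of Bhatia--Davis or Muilwijk type for the bounded quantity $(2k-K)^2$, bounding $m_4$ in terms of $m_2$ and the range $[0,(K-2L)^2]$, which ought to yield $\delta_{\mathbf{J}}<0$ without splitting into cases.
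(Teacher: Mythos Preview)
Your argument has the right shape but leaves the decisive step unproved. You invoke ``the impossibility of $\mathbf{M}(\bar\xi^*)=\mathbf{I}_p$ for $L>B_K$'' as if it were already established, citing the remark before Theorem~\ref{thm:innerOrbits}. That remark is a forward announcement, not a proof; establishing it is precisely the core of Lemma~\ref{lem:leadingterm}. The paper proves it via the variance bound you only allude to at the end: from $m_2=m_4=0$ one reads off $\sum_k\bar w_k(2k-K)^2=K$ and $\sum_k\bar w_k(2k-K)^4-K^2=2K(K-1)$, and the Muilwijk/Bhatia--Davis inequality for the bounded variable $(2k-K)^2$ on $\mathcal{X}_{L,K-L}$ then forces $L\le B_K$, the desired contradiction. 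Mentioning this tool as ``a cleaner route'' without carrying it out is not a proof.

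Your residual case ($a_4<0$, support on two consecutive symmetric orbits) is both unfinished and unnecessary. Unfinished, because the sign check on $\delta_{\mathbf{J}}$ is merely promised (``I would verify directly''); for general $K$ and general position $a$ this is not a one-line computation. Unnecessary, because once the support values $u_1<u_2$ are consecutive admissible values of $(2k-K)^2$ on the restricted region, they are also consecutive on the full cube: the extra orbits with $k<L$ all satisfy $(2k-K)^2>(K-2L)^2\ge u_2$, where your concave $g$ is already negative. Hence $\tilde\psi(k)\le p$ for every $k\in\{0,\dots,K\}$, so $\bar\xi^*$ is $D$-optimal on the unrestricted cube and again $\mathbf{M}(\bar\xi^*)=\mathbf{I}_p$, collapsing this case into the impossibility you left open. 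The paper's proof is organized exactly this way: when $a_4\le0$, either the support is a single symmetric orbit (singular by Lemma~\ref{lem:regularity}) or the equivalence inequality extends to all of $\{0,\dots,K\}$, and then the Muilwijk bound closes the argument.
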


\begin{proof}
  We will use the same notation as in Lemma~\ref{lem:sensitivitypoly}
  and its proof.

  Let the coefficient of the fourth order monomial in $\tilde{\psi}$
  be nonpositive or equivalently let $a_{4}\leq 0$. Then the
  function $\tilde{\psi}$ as a function in $k$ has either a single maximum at
  $k=K/2$, two (symmetric) maxima outside $(L,K-L)$ (respectively at
  $k=L$ and $k=K-L$ for the admitted orbits), two (symmetric) maxima
  inside $(L,K-L)$, or is constant.

  In the first two cases $\bar{\xi}^*$ is supported on one symmetric
  orbit only.  It follows from Lemma~\ref{lem:regularity}, that the
  information matrix has to be singular. But in this case the function
  $\tilde{\psi}$ would not be defined, which is a contradiction.

  In the last case, if the orbitwise sensitivity is constant, we have
  $\tilde{\psi}(k)\leq p$ for all $k\in\{0,\ldots,K\}$ and,
  consequently, $\bar{\xi}^*$ is optimal on the unrestricted design
  region $\mathcal{X}_{0,K}$.
  
  In the third case, i.e.\ two maxima inside of $(L,K-L)$, 
  the optimal invariant design $\bar{\xi}^*$ has all its weight on
  either one or two symmetric orbits.  If these orbits do not satisfy
  the conditions in Lemma~\ref{lem:regularity}, the information
  matrix would be singular, which leads to a contradiction.  Otherwise
  the design is optimal on $\mathcal{X}_{0,K}$, with the same argument
  as for the constant case.

  Now let $\bar{\xi}^*$ be optimal on $\mathcal{X}_{0,K}$. Then its
  information matrix is $\mathbf{I}_{p}$. It follows that
  $m_{2}=m_{4} = 0$ and thus
  \begin{equation}
    \label{eq:1}
    \sum_{k=L}^{K-L}\bar{w}_{k}(2 k - K)^2  = K
    \quad\text{and}\quad
    \sum_{k=L}^{K-L}\bar{w}_{k}(2k-K)^4 - K^2 = 2K(K-1)\,.
  \end{equation}
  The left-hand sides of these two equations can be interpreted as
  expectation and variance, respectively, of a discrete random
  variable taking values $(2k-K)^2$, $k=L,\ldots,K-L$.  An upper bound
  for the variance is given in \citet{Muilwijk:1966a} \citep[see
  also][]{BhatiaDavis:2000}. This yields for the variance
  \begin{equation*}
    \sum_{k=L}^{K-L}\bar{w}_{k}(2k-K)^4 - K^2
    \leq ((2L-K)^2 - K)(K - R_{K})\,,
  \end{equation*}
  where $R_{K}=0$ for $K$ even and $R_{K} = 1$ for $K$ odd.  Since
  $B_{K}<L$ it follows that
  \begin{equation*}
    ((2L-K)^2 - K)(K - R_{K})
    < ((2B_{K}-K)^2 - K)(K - R_{K})
    = 2K(K-1)\,,
  \end{equation*}
  which is in contradiction to~\eqref{eq:1}.  Hence $a_{4}$ and
  consequently the leading coefficient has to be positive.
\end{proof}
\begin{proof}[Proof of Theorem~\ref{thm:innerOrbits}]
  Under the assumptions of Theorem~\ref{thm:innerOrbits}, the
  orbitwise sensitivity function $\tilde{\psi}$ of the optimal design
  $\bar{\xi}^*$ is a polynomial of degree four with positive leading
  term by Lemma~\ref{lem:sensitivitypoly} and \ref{lem:leadingterm}.
  Then, in view of the fundamental theorem of algebra, the equality
  $\tilde\psi(k)=p$ can only have at most four distinct roots.
  Because of the symmetry of the sensitivity function with respect to
  $K/2$ (cf.\ Lemma~\ref{lem:sensitivitypoly}) the optimal design has
  thus to be concentrated on at most two symmetric orbits.  In order
  to fulfill the condition $\tilde\psi(k)\leq p$ for all
  $k=L,\ldots,K-L$, imposed by the equivalence theorem on the optimal
  design $\bar{\xi}^*$, these symmetric orbits can only be the outmost
  orbit $\mathcal{O}_{L}\cup\mathcal{O}_{K-L}$ on the boundaries and
  the central orbit $\mathcal{O}_{K/2}$ for $K$ even and
  $\mathcal{O}_{(K-1)/2}\cup\mathcal{O}_{(K+1)/2}$ for $K$ odd,
  respectively.

  On the other hand the nonsingularity condition of
  Lemma~\ref{lem:regularity} requires that the optimal design
  $\bar{\xi}^*$ has to be supported by at least two symmetric orbits
  and, hence, $\bar{\xi}^*$ is of the form specified in the Theorem.
  Finally only the weights have to be optimized given the two
  symmetric orbits.
 \end{proof}

 \newpage
 \section*{Appendix B: Tables}
\begin{table}[ht]
\centering
\caption{Symmetric invariant $D$-optimal designs for wide bounds 
from Theorem \ref{thm:DoptEffAsFactorial}} 
\label{tab:optOuterOrbitSym}
\begin{tabular}{rrrrrrrr}
  \hline
  $K$ & $L$  & $\ell$ & $c$ & $\bar{w}_{L}^*$
  & $\bar{w}_{\ell}^*$
  & $\bar{w}_{c}^*$ & $B_{K}$ \\ 
  \hline
 4 & 0 & 1 & 2 &0.0625 & 0.2500 & 0.3750 & 0.42 \\ 
 \hline
 5 & 0 & 1 & 2 &0.0312 & 0.1562 & 0.3125 & 0.56 \\ 
 \hline
 6 & 0 & 1 & 3&$-$ & 0.1875 & 0.6250 & 1\phantom{.00} \\ 
   & 1 & $-$ & 3&0.1875 & $-$ & 0.6250 & 1\phantom{.00} \\ 
 \hline
 7 & 0 & 2 & 3&0.0187 & 0.2625 & 0.2188 & 1.21 \\ 
   & 1 & 2 & 3&0.0938 & 0.0938 & 0.3125 & 1.21 \\ 
 \hline
 8 & 0 & 2 & 4&0.0078 & 0.2188 & 0.5469 & 1.65 \\ 
   & 1 & 2 & 4&0.0333 & 0.1750 & 0.5833 & 1.65 \\ 
 \hline
 9 & 0 & 2 & 4&0.0018 & 0.1607 & 0.3375 & 1.90 \\ 
   & 0 & 3 & 4&0.0125 & 0.3750 & 0.1125 & 1.90 \\ 
   & 1 & 2 & 4&0.0069 & 0.1528 & 0.3403 & 1.90 \\ 
   & 1 & 3 & 4&0.0375 & 0.2750 & 0.1875 & 1.90 \\ 
  \hline
10 & 0 & 3 & 5&0.0071 & 0.2679 & 0.4500 & 2.35 \\ 
   & 1 & 3 & 5&0.0195 & 0.2344 & 0.4922 & 2.35 \\ 
   & 2 & 3 & 5&0.0833 & 0.1250 & 0.5833 & 2.35 \\ 
   \hline
11 & 0 & 3 & 5&0.0035 & 0.1910 & 0.3056 & 2.63 \\ 
   & 1 & 3 & 5&0.0089 & 0.1786 & 0.3125 & 2.63 \\ 
   & 2 & 3 & 5&0.0347 & 0.1389 & 0.3264 & 2.63 \\ 
   \hline
12 & 0 & 4 & 6&0.0059 & 0.3223 & 0.3438 & 3.08 \\ 
   & 1 & 4 & 6&0.0129 & 0.2946 & 0.3850 & 3.08 \\ 
   & 2 & 4 & 6&0.0352 & 0.2344 & 0.4609 & 3.08 \\ 
   & 3 & 4 & 6&0.1500 & 0.0375 & 0.6250 & 3.08 \\ 
   \hline
22 & 0 & 7 & 11&$-$ & 0.1719 & 0.6562 & 7\phantom{.00} \\ 
   & 0 & 8 & 11&0.0014 & 0.2865 & 0.4242 & 7\phantom{.00} \\ 
   & 1 & 7 & 11&$-$ & 0.1719 & 0.6562 & 7\phantom{.00} \\ 
   & 1 & 8 & 11&0.0021 & 0.2821 & 0.4317 & 7\phantom{.00} \\ 
   & 2 & 7 & 11&$-$ & 0.1719 & 0.6562 & 7\phantom{.00} \\ 
   & 2 & 8 & 11&0.0033 & 0.2758 & 0.4417 & 7\phantom{.00} \\ 
   & 3 & 7 & 11&$-$ & 0.1719 & 0.6562 & 7\phantom{.00} \\ 
   & 3 & 8 & 11&0.0055 & 0.2667 & 0.4557 & 7\phantom{.00} \\ 
   & 4 & 7 & 11&$-$ & 0.1719 & 0.6562 & 7\phantom{.00} \\ 
   & 4 & 8 & 11&0.0098 & 0.2521 & 0.4762 & 7\phantom{.00} \\ 
   & 5 & 7 & 11&$-$ & 0.1719 & 0.6562 & 7\phantom{.00} \\ 
   & 5 & 8 & 11&0.0198 & 0.2263 & 0.5077 & 7\phantom{.00} \\ 
   & 6 & 7 & 11&$-$ & 0.1719 & 0.6562 & 7\phantom{.00} \\ 
   & 6 & 8 & 11&0.0481 & 0.1719 & 0.5600 & 7\phantom{.00} \\ 
   & 7 & $-$ & 11&0.1719 & $-$ & 0.6562 & 7\phantom{.00} \\ 
   & 7 & 8 & 11&0.1719 & $-$ & 0.6562 & 7\phantom{.00} \\ 
   \hline
\end{tabular}
\end{table}
\clearpage
\begin{table}[ht]
  \centering
  \caption{Symmetric invariant $D$-optimal designs for narrow bounds
    from Theorem~\ref{thm:innerOrbits}}
  \label{tab:optInnerOrbitSym}
  \begin{tabular}{rrrrrrr}
    \hline
    $K$ & $L$ & $c$ & $\bar{w}_{L}^*$ & $\bar{w}_{c}^*$ & $D$-Efficiency & $B_{K}$ \\ 
    \hline
    4  & 1  & 2  & 0.2993 & 0.4015 & 0.8892 & 0.42 \\   \hline
    5  & 1  & 2  & 0.1939 & 0.3061 & 0.9725 & 0.56 \\    \hline
    6  & 2  & 3  & 0.3865 & 0.2270 & 0.8854 & 1\phantom{.00} \\   \hline
    7  & 2  & 3  & 0.2798 & 0.2202 & 0.9682 & 1.21 \\     \hline
    8  & 2  & 4  & 0.2282 & 0.5435 & 0.9960 & 1.65 \\ 
       & 3  & 4  & 0.4212 & 0.1576 & 0.8846 & 1.65 \\     \hline
    9  & 3  & 4  & 0.3461 & 0.1539 & 0.9660 & 1.90 \\     \hline
    10 & 3  & 5  & 0.2744 & 0.4512 & 0.9926 & 2.35 \\ 
       & 4  & 5  & 0.4397 & 0.1205 & 0.8863 & 2.35 \\     \hline
    11 & 3  & 5  & 0.1969 & 0.3031 & 0.9985 & 2.63 \\ 
       & 4  & 5  & 0.3903 & 0.1097 & 0.9640 & 2.63 \\     \hline
    12 & 4  & 6  & 0.3188 & 0.3624 & 0.9905 & 3.08 \\ 
       & 5  & 6  & 0.4513 & 0.0975 & 0.8892 & 3.08 \\     \hline
    13 & 4  & 6  & 0.2313 & 0.2687 & 0.9973 & 3.38 \\ 
       & 5  & 6  & 0.4178 & 0.0822 & 0.9622 & 3.38 \\     \hline
    14 & 4  & 7  & 0.1911 & 0.6177 & 0.9999 & 3.84 \\ 
       & 5  & 7  & 0.3582 & 0.2836 & 0.9891 & 3.84 \\ 
       & 6  & 7  & 0.4591 & 0.0818 & 0.8924 & 3.84 \\     \hline
    15 & 5  & 7  & 0.2655 & 0.2345 & 0.9965 & 4.15 \\ 
       & 6  & 7  & 0.4352 & 0.0648 & 0.9607 & 4.15 \\     \hline
    16 & 5  & 8  & 0.2146 & 0.5707 & 0.9994 & 4.61 \\ 
       & 6  & 8  & 0.3904 & 0.2191 & 0.9879 & 4.61 \\ 
       & 7  & 8  & 0.4648 & 0.0704 & 0.8957 & 4.61 \\     \hline
    17 & 6  & 8  & 0.2987 & 0.2013 & 0.9959 & 4.93 \\ 
       & 7  & 8  & 0.4469 & 0.0531 & 0.9595 & 4.93 \\     \hline
    18 & 6  & 9  & 0.2385 & 0.5229 & 0.9990 & 5.39 \\ 
       & 7  & 9  & 0.4149 & 0.1702 & 0.9868 & 5.39 \\ 
       & 8  & 9  & 0.4691 & 0.0618 & 0.8990 & 5.39 \\   \hline
    19 & 6  & 9  & 0.1842 & 0.3158 & 0.9999 & 5.73 \\ 
       & 7  & 9  & 0.3301 & 0.1699 & 0.9954 & 5.73 \\ 
       & 8  & 9  & 0.4551 & 0.0449 & 0.9586 & 5.73 \\     \hline
    20 & 7  & 10 & 0.2624 & 0.4751 & 0.9987 & 6.19 \\ 
       & 8  & 10 & 0.4325 & 0.1349 & 0.9858 & 6.19 \\ 
       & 9  & 10 & 0.4725 & 0.0550 & 0.9021 & 6.19 \\     \hline
    21 & 7  & 10 & 0.2028 & 0.2972 & 0.9997 & 6.53 \\ 
       & 8  & 10 & 0.3590 & 0.1410 & 0.9950 & 6.53 \\ 
       & 9  & 10 & 0.4611 & 0.0389 & 0.9580 & 6.53 \\     \hline
    22 & 8  & 11 & 0.2861 & 0.4278 & 0.9984 & 7\phantom{.00} \\ 
       & 9  & 11 & 0.4451 & 0.1098 & 0.9848 & 7\phantom{.00} \\ 
       & 10 & 11 & 0.4752 & 0.0496 & 0.9051 & 7\phantom{.00} \\ 
    \hline
  \end{tabular}
\end{table}

\end{document}